\documentclass[12pt,letterpaper]{article}

\usepackage{amsfonts}
\usepackage{amsmath}\numberwithin{equation}{section}
\usepackage{amsthm}
\usepackage{amssymb}
\usepackage{bm}
\usepackage[tmargin=1.0in, lmargin=0.75in, rmargin=0.75in, bmargin=1.0in]{geometry}
\usepackage{hyperref}
\usepackage{mathrsfs}
\usepackage{mathtools}
\usepackage[T1]{fontenc}
\usepackage{xparse}
\usepackage{graphicx}
\usepackage[dvipsnames]{xcolor}
\usepackage{multirow}
\usepackage[all]{xy}

\newcommand{\R}{\mathbb{R}}
\newcommand{\Z}{\mathbb{Z}}
\newcommand{\A}{\mathscr{A}}

\let\oldd\d
\renewcommand{\d}{%
  \relax\ifmmode\mathrm{d}\else\oldd\fi
}
\renewcommand{\dh}{%
  \relax\ifmmode\mathrm{d}_{1,0}\else\olddh\fi
}
\newcommand{\dv}{\mathrm{d}_{0,1}}
\newcommand{\dR}{\mathrm{d}_{2,-1}}

\DeclareMathOperator{\ii}{\mathbf{i}}
\newcommand{\im}{\operatorname{im}}
\newcommand{\e}{\mathbf{e}}
\newcommand{\Lie}{\mathcal{L}}
\NewDocumentCommand{\Cinf}{G{}}{\IfNoValueTF{#1}{C^{\infty}}{C^{\infty}_{#1}}}
\DeclareMathOperator{\dcd}{\,\cdot\,,\cdot\,}
\DeclareMathOperator{\brk}{[\dcd]}

\renewcommand{\hat}{\widehat}
\renewcommand{\H}[1]{%
  \ifmmode\mathscr{H}^{#1}\else\oldH{#1}\fi
}
\let\oldH\H

\newtheorem{theorem}{Theorem}[section]

\newtheorem{corollary}[theorem]{Corollary}

\newtheorem{definition}[theorem]{Definition}
\newtheorem{example}[theorem]{Example}

\newtheorem{lemma}[theorem]{Lemma}

\newtheorem{remark}[theorem]{Remark}

\newtheorem{proposition}[theorem]{Proposition}

\numberwithin{equation}{section}
\begin{document}

\title{Hamiltonian semisprays on Lie algebroids}

\author{Misael Avenda\~{n}o Camacho, Jhonny Kama Mamani, Eduardo Velasco Barreras}

\maketitle

\begin{abstract}
We study the existence of Hamiltonian semisprays on Lie algebroids. This work is motivated by a problem studied by Vaisman for tangent bundles, and we extend this question to the setting of arbitrary Lie algebroids and provide a general solution.

More precisely, given a Lie algebroid and a regular Lagrangian, we construct a
family of Poisson brackets on the algebroid such that the Hamiltonian vector
field associated with the corresponding energy function is a semispray. Our approach
is based on the symplectic geometry of the prolongation of a Lie algebroid and a
cohomological analysis of its vertical subbundle. The results provide a geometric framework for second-order Hamiltonian dynamics
on Lie algebroids, extending  some known facts in the classical tangent bundle case and revealing new
interactions between Poisson geometry and algebroid structures.


\end{abstract}

\section{ Introduction}

The relationship between Hamiltonian dynamics and second-order differential equations is a classical topic in geometric mechanics, \cite{arnold1989mathematical,marsden1994introduction,marsden1984semidirect,szilasi2013connections}. 
In particular, the problem of determining whether Hamiltonian vector fields can be of second order has been studied in the context of tangent bundles. 
In the seminal work \cite{Vais1995}, Vaisman showed that, in general, Poisson structures on the tangent bundle may not admit second-order differential equations as Hamiltonian, and provided both counterexamples and specific constructions for which this property holds.

This paper aims to extend this problem to the broader setting of Lie algebroids. Given that Lie algebroids provide a natural framework for generalizing geometric mechanics beyond tangent bundles \cite{BarberoEtAl2016,CortesEtAl2009,CortesEtAl2006,dediego-marrero-2006,deLeonEtAl2009,deLeonEtAl2005,Grabowska-2006,IglesiasEtAl2007,Martinez2001,Weinstein1996}, it is natural to ask whether Vaisman’s problem admits a solution in this setting. More precisely, we address the following question: given a Lie algebroid, does there exist a Poisson structure admitting semisprays as Hamiltonian vector fields?

While Lagrangian and Hamiltonian formalisms on Lie algebroids are well established \cite{dediego-marrero-2006,deLeonEtAl2005,Martinez2001}, the existence of Poisson brackets with this specific dynamical property has not, to our knowledge, been addressed in this generality. The extension of Vaisman's results from the tangent bundle to general Lie algebroids is nontrivial,
since the tangent bundle of a Lie algebroid lacks several canonical structures present in a second tangent bundle,
and a general anchor and bracket introduce further difficulties.

Our main result shows that, for any regular Lagrangian on a Lie algebroid, one can construct a family of Poisson brackets such that the Hamiltonian vector field of the associated energy function is a semispray (Theorem \ref{maintheorem}). 

A key novelty of our approach lies in the use of the prolongation of a Lie algebroid \cite{carinena2007reduction,deLeonEtAl2005,martinez2002lie,Martinez2001}, 
which allows us to overcome intrinsic obstructions present on general Lie algebroids, such as the absence of symplectic structures.
This constitutes the main difference between the work of Vaisman on the tangent bundle and our approach to the same problem on Lie algebroids.
In his paper, Vaisman shows that a certain subfamily of the $\mathbb{V}$-Lagrangian symplectic forms admits Hamiltonian second-order differential equations. 

However, the arguments used by Vaisman cannot be directly extended to a general Lie algebroid. 
Indeed, dimension considerations already impose restrictions: if the total space of a Lie algebroid has odd dimension, then it does not admit any symplectic structure. Already in the case of even dimension, 
there is no canonical construction of a symplectic structure with a Hamiltonian semispray, in contrast with the tangent bundle case, where such a construction arises naturally.
For these reasons, we work on the prolongation of a Lie algebroid. In this setting, regular Lagrangians induce symplectic structures vanishing on the vertical subbundle such that its energy has for Hamiltonian section a second-order differential equation. In this paper, we enlarge and characterize the class of symplectic sections on the prolongation of a Lie algebroid in Propositions \ref{prop1} and \ref{prop2}, analogously to what Vaisman does in the tangent bundle. The final step in our construction is to prove that these symplectic sections on the prolongations induce Poisson brackets on the Lie algebroid with the properties established in Theorem \ref{maintheorem}.

A central ingredient in our analysis is a cohomological study of vertical subbundles
arising from the prolongation. In particular, we prove that the cohomology associated
with the vertical component of the Lie algebroid differential vanishes in positive degrees.
This result is crucial for constructing global primitives of certain differential forms
and for obtaining a structural description of symplectic sections adapted to the vertical
 subbundle. Although similar vanishing results can be obtained locally in coordinates, as in the
classical case considered by Vaisman, our approach provides a global and intrinsic
construction of the corresponding homotopy operator (Section \ref{sec4}). This is achieved by exploiting
the geometry of pullback bundles and representations of Lie algebroids, leading to a
coordinate-free proof that may be of independent interest.

As a consequence of this cohomological analysis, we obtain a characterization of a family
of symplectic sections on the prolongation that vanish on vertical directions. This
description allows us to construct a family of Poisson brackets explicitly on the Lie
algebroid for which some Hamiltonian dynamics are described by semisprays on the Lie algebroid.

The paper is organized as follows. In Section \ref{sec:prelim}, we recall the basic notions on Lie
algebroids and semisprays. Section \ref{sec:main} contains the statement of the main result.
Section \ref{sec4} is devoted to the cohomological analysis of vertical subbundles and the
construction of a homotopy operator, which plays a fundamental role in the sequel.
In Section \ref{sec:Prol}, we review the prolongation of a Lie algebroid and some geometric structures which are relevant for the purposes of this paper.
Section \ref{sec:symp} deals with symplectic sections on the prolongation associated with regular Lagrangians. In Section \ref{sec:vanish} we classify symplectic sections vanishing in the vertical subbundle. And finally, in Section \ref{sec:proof}, we complete the proof of the main theorem by constructing the relevant Poisson brackets. 
We also provide an interesting example at the end of this section.


\section{Preliminary Notions}\label{sec:prelim}

Here, we recall the notions of Lie algebroids and semisprays.
For a comprehensive treatment of Lie algebroids, see \cite{Mackenzie2005}.

\begin{definition}
A \emph{Lie algebroid} $A$ over a smooth manifold $M$ is a vector bundle $p: A \rightarrow M$ equipped with an $\mathbb{R}$-Lie bracket $[ \cdot,\cdot ]: \Gamma(A)\times \Gamma(A)\rightarrow \Gamma(A) $ on the $C^\infty(M)$-module of sections $\Gamma(A)$ and a vector bundle homomorphism $\rho : A \rightarrow TM $, called anchor map, satisfying the Leibniz rule
\begin{equation*}
[\sigma, f \eta ] =  f[\sigma,\eta] + (\mathcal{L}_{\rho(\sigma)}f)\eta,
\end{equation*}
for all $\sigma,\eta \in \Gamma(A)$, $f\in C^\infty(M).$
\end{definition}

We also denote by $\rho: \Gamma(A) \rightarrow \mathfrak{X}(M)$ the induced  $C^\infty(M)$-linear mapping on sections. Moreover, an immediate consequence of this definition is that the anchor map  $\rho$ is a Lie algebra  homomorphism:
\begin{equation*}
\rho([ \sigma, \eta ]) = [\rho(\sigma),\rho(\eta) ], \ \ \ \  \sigma,\eta\in \Gamma(A).
\end{equation*}
The image $\rho(A)= \mathcal{D}$ of the anchor map is a smooth generalized distribution on $M$ which is integrable in the sense of Stefan--Sussmann \cite{stefan-1974, sussman-1973}. 
The integral submanifolds of $\mathcal{D}$ define a singular foliation of $M$ called the \emph{characteristic foliation} of $A$.

By fixing local coordinates $(U, x^1,\ldots x^n)$ on $M$ and a local basis $\{ e_i \}_{i=1}^r$ of sections $\Gamma(U)$ we obtain local coordinates of $A$, called \emph{adapted coordinates}:
\begin{equation}
 A \ni a \mapsto (x^1,\ldots,x^n,y^1,\ldots,y^r),\label{adapcoord}
\end{equation} 
where $a = y^j \cdot e_j (x).$ The smooth functions $\rho^i_j$, $C^k_{ij}$ on $C^\infty(U)$ defined by
\begin{equation}
\rho(e_j)=\rho^i_j\frac{\partial}{\partial x^i}, \ \ \ \ [ e_i,e_j ]= C^k_{ij}e_k \label{strucfunc}
\end{equation} 
are called the \emph{structure functions} of the Lie algebroid and satisfy the so-called \emph{structure equations} of the Lie algebroid:
\begin{equation}\label{compconditions} 
\rho^i_j\frac{\partial \rho^k_l }{\partial x^i}- \rho^i_l\frac{\partial \rho^k_j }{\partial x^i}= \rho_i^kC^i_{jl}, 
\qquad
\sum_{\mathrm{cyclic}(j,l,s)}\left( \rho^i_j\frac{\partial C^k_{ls} }{\partial x^i} + C^t_{ls} C^k_{jt}\right)=0
\end{equation}

The simplest example of a Lie algebroid is the tangent bundle of a smooth manifold.

\begin{example}\label{tant-algebroid}
For any manifold $M$, the tangent bundle $\tau_M: TM \rightarrow M $ is a Lie algebroid with the standard Lie bracket of vector fields and anchor given by the identity map on $TM$.
\end{example}

Another example of Lie algebroids arises in the cotangent bundle of a \emph{Poisson manifold} \cite{coste-dazord-weinstein-1987}.

A \emph{Poisson manifold} is a smooth manifold $M$ equipped with a bivector field 
$\Pi \in \mathfrak{X}^2(M)$ satisfying the Jacobi identity
$$[\Pi,\Pi]_{\mathrm{SCH}} = 0,$$
where $[\cdot,\cdot]_{\mathrm{SCH}}$ denotes the Schouten--Nijenhuis bracket of multivector 
fields on $M$ \cite{lichnerowicz-1977}.

Equivalently, a Poisson structure on $M$ can be described by a \emph{Poisson bracket}, 
a bilinear operation $\{\cdot,\cdot\}: C^\infty(M)\times C^\infty(M)\rightarrow C^\infty(M)$ 
making $C^\infty(M)$ into a Lie algebra and satisfying the Leibniz rule
$$\{f,gh\} = \{f,g\}h + g\{f,h\}, \qquad \forall\, f,g,h\in C^\infty(M).$$
The correspondence between both descriptions is given by
$$\{f,g\} = \Pi(\mathrm{d}f,\mathrm{d}g), \qquad \forall\, f,g\in C^\infty(M).$$

\begin{example}\label{Poiss-algebroid}
Let $(M,\Pi)$ be a Poisson manifold. The cotangent bundle $T^*M$ carries a natural 
Lie algebroid structure, with anchor map given by the vector bundle morphism
$$\Pi^\sharp: T^*M \rightarrow TM, \qquad \Pi^\sharp(\omega) := \mathrm{i}_\omega \Pi$$ 
and Lie bracket on $\Omega^1(M)$ given by
$$[\sigma,\eta]_\Pi := \mathcal{L}_{\Pi^\sharp(\sigma)}\eta 
   - \mathcal{L}_{\Pi^\sharp(\eta)}\sigma - \mathrm{d}(\Pi(\sigma,\eta)),$$
for all $\sigma,\eta\in\Omega^1(M)$. This Lie algebroid over $M$ is called the 
\emph{cotangent Lie algebroid} of $(M,\Pi)$.
\end{example}


\begin{example}
An involutive distribution $\mathcal{D}\subset TM$ is a Lie algebroid with the inclusion $\mathcal{D} \hookrightarrow TM$ as anchor map and the usual bracket of vector fields as Lie bracket.
More generally, if $A$ is a given Lie algebroid and $V\subset A$ is a subbundle such that $\Gamma(V)$ is closed under the bracket on $A$, 
then $V$ is itself a Lie algebroid with anchor and bracket obtained from $A$ by simple restriction.
\end{example}

\begin{example}
Let  $\mathfrak{g}\ni \xi \mapsto \xi_M \in\mathfrak{X}(M) $ be a (right) action of a real Lie algebra $\mathfrak{g}$ on the manifold $M$. 
Its associated Lie algebroid consists of the trivial vector bundle $\mathrm{pr}_1:M \times\mathfrak{g} \rightarrow M$,
 with anchor map $\rho: M \times\mathfrak{g} \rightarrow TM $ defined by
$$\rho(m,\xi):=\xi_M(m). $$
By identifying the sections of $\mathrm{pr}_1$ with the set of $\mathfrak{g}$-valued functions $v: M \rightarrow \mathfrak{g}$, 
a Lie bracket is defined by
 $$ [v,w](m):= [ v(m),w(m)] + \langle \mathrm{d}w, (v(m))_M \rangle (m) -    \langle \mathrm{d}v, (w(m))_M \rangle (m).$$
\end{example}



\begin{definition}
A vector field $\mathcal{V}\in \mathfrak{X}(A)$ on a Lie algebroid  $ (p:A\rightarrow M, [ \cdot,\cdot], \rho) $ is called a \emph{semispray} if
\begin{equation}
Tp \circ \mathcal{V} = \rho, \label{defsemi}
\end{equation}
where $Tp: TA \rightarrow TM$ is the tangent map of the projection. 
A semispray  $\mathcal{V}\in \mathfrak{X}(A)$ is said to be a \emph{spray} 
if the vector field  $\mathcal{V}$ satisfies the homogeneity condition
\begin{equation}
m^*_k  \mathcal{V}  = k\mathcal{V}, \ \ \ \ \text{for }k>0,\label{homogcond}
\end{equation}
where $m_k: A \rightarrow A$, $m_k(a):= ka$, is the dilation map.
\end{definition}



To our knowledge, this notion was first introduced in \cite[Definition 2.2]{Weinstein1996}, and is closely related to
\emph{second-order differential equations} (SODEs) on Lie algebroids, which we recall in Section \ref{sec:Prol}.

In adapted coordinates  $(U, x, y)$ for $A$, a semispray $\mathcal{V}$  has the form
\begin{equation*}
\mathcal{V}(x,y)=y^j\rho^i_j(x)\frac{\partial}{\partial x^i}+\nu^k(x,y)\frac{\partial}{\partial y^k},
\end{equation*}
for some local functions $\nu^k(x,y)\in C^\infty (A)$. If  $ \mathcal{V}(x,y)$ is a spray, then $\nu^k(x,y)=\nu^k_{ln}(x)y^ly^n $. Note that every spray  $ \mathcal{V}$ vanishes at the zero section of the vector bundle $p: A \rightarrow M$, that is, $ \mathcal{V}(x,0)=0$.

Furthermore, the homogeneity condition \eqref{homogcond} for a spray $\mathcal{V}$ can be expressed in terms of the  Liouville (Euler) vector field $\mathbf{E}\in \mathfrak{X}(A)$, which locally has the form
\begin{equation*}
\mathbf{E}(x,y)=y^k\frac{\partial}{\partial y^k}.
\end{equation*}
Indeed, a semispray $\mathcal{V}\in \mathfrak{X}(A)$ is a spray if and only if 
\begin{equation*}
[ \mathbf{E},\mathcal{V} ]= \mathcal{V}.
\end{equation*}

Let $L \in C^\infty(A) $ be a smooth function on $A$. The \emph{Legendre transform} $\mathscr{L} :  A \rightarrow A^*$ induced by the \emph{Lagrangian} function $L$ is the map 
$$ A \ni a \mapsto   \mathscr{L}(a) \in  A^*  $$
given by 
$$\langle  \mathscr{L}(a), b \rangle := \left.  \left( \frac{\mathrm{d}}{\mathrm{d}t}L(a+tb) \right)\right|_{t=0}, $$
for all $b\in A$ such that $p(b)=p(a).$ The Lagrangian $L$ is called \emph{regular} if the Legendre transform is a diffeomorphism. In  adapted coordinates, the Lagrangian is regular if and only if the matrix 
\begin{eqnarray*}
\left(  \frac{\partial^2 L}{\partial y^i\partial y^j} \right)
\end{eqnarray*}
is non-singular.
The \emph{energy function}  $E_L \in C^\infty(A) $ associated with $L \in C^\infty(A) $ is given by

\begin{equation}
E_L := \mathcal{L}_{\mathbf{E}}L -L, \label{ener-func}
\end{equation}
where $ \mathcal{L}_{\mathbf{E}}$ denotes the Lie derivative with respect to the Liouville (Euler) vector field.

\begin{example}\label{ex:RiemannLagrangian}
  A Riemannian metric on a Lie algebroid $A$ is a section $g\in \Gamma(A^*\otimes A^*)$ that is symmetric and positive definite. 
  The corresponding Lagrangian $L \in C^\infty(A) $ is the fiberwise quadratic function $L(a):= g(a,a)$.
  In adapted coordinates, we have $L=\tfrac{1}{2}g_{ij}y^iy^j$, where $g_{ij}:=g(e_i,e_j)$.
  Thus, $\left( \frac{\partial^2 L}{\partial y^i\partial y^j} \right)=(g_{ij})$, which is non-singular since $g$ is positive definite.
  Therefore, the Lagrangian $L$ is regular.
  Furthermore, since $L$ is quadratic, we have $E_L=L$.
  The same occurs if $g$ is a pseudo-Riemannian metric.
\end{example}

\section{Main Theorem}\label{sec:main}

Now we present our main result.

\begin{theorem}\label{maintheorem}
On a Lie algebroid $(p: A \rightarrow M, [ \cdot, \cdot ], \rho)$, let  $L\in C^\infty(A)$ be a regular Lagrangian  
with associated energy function $E_L$.
Then, there exists a family of Poisson brackets $ \{ \cdot , \cdot \} $ on $A$ with the property that 
for every function $f\in C^\infty(M)$ the Hamiltonian vector field of the function $E_L +p^*f $ is a semispray.
\end{theorem}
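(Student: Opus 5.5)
The plan is to work on the prolongation $\mathcal{T}^AA$ of $A$ along $p$ and push everything down to $A$ through the anchor of the prolongation. Write $\rho^1:\mathcal{T}^AA\to TA$ for this anchor. Its elements are pairs $(b,v)\in A\times TA$ with $\rho(b)=Tp(v)$, and $\rho^1(b,v)=v$. A SODE is a section $\Gamma$ of $\mathcal{T}^AA$ whose $A$-component at $a$ equals $a$. For such $\Gamma$ we get $Tp\circ\rho^1(\Gamma)(a)=\rho(a)$, so $\rho^1(\Gamma)$ is a semispray. It therefore suffices to produce Poisson brackets on $A$ whose Hamiltonian vector fields have the form $\rho^1$ of the corresponding Hamiltonian sections on the prolongation, and to arrange that those sections are SODEs.

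\textbf{Step 1 (symplectic sections).} Let $\omega_L=-\mathrm{d}^{\mathcal{T}}\theta_L$ be the Poincar\'e--Cartan section attached to $L$. Regularity of $L$ makes $\omega_L$ nondegenerate. The vertical subbundle $V$ is Lagrangian for $\omega_L$, and the standard result is that the $\omega_L$-Hamiltonian section of $E_L$ is a SODE. To get a family of brackets I would enlarge $\omega_L$ to $\omega=\omega_L+\beta$, where $\beta$ is a closed section of $\wedge^2(\mathcal{T}^AA)^*$ vanishing whenever one argument is vertical. The simplest choice is the pullback of a closed $A$-form on $M$ along the projection $\mathcal{T}^AA\to A$. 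Propositions~\ref{prop1} and~\ref{prop2} characterize such sections, and the vertical cohomology vanishing of Section~\ref{sec4} supplies the needed primitives. Then $\omega$ is still symplectic and $V$ is still Lagrangian, because $\beta$ only shifts the semibasic-by-semibasic block.

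\textbf{Step 2 (Hamiltonian sections are SODEs).} For $\omega$ in this family, the map $\omega^\flat$ sends $V$ isomorphically onto the annihilator of $V$, i.e.\ onto the semibasic forms. Since $\mathrm{d}^{\mathcal{T}}(p^*f)$ is semibasic, the Hamiltonian section of $p^*f$ is vertical. I then need to check that the Hamiltonian section $\Gamma_\omega$ of $E_L$ is still a SODE. The comparison is $\omega^\flat(\Gamma_\omega)-\omega_L^\flat(\Gamma_{\omega_L})=-\,\beta^\flat(\Gamma_{\omega})$, and $\beta^\flat$ of anything is semibasic. Hence $\Gamma_\omega-\Gamma_{\omega_L}$ is vertical, and $\Gamma_\omega$ is a SODE. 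Adding the vertical Hamiltonian section of $p^*f$ keeps it a SODE.

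\textbf{Step 3 (descend to a Poisson bracket on $A$).} Let $\Pi_\omega\in\Gamma(\wedge^2\mathcal{T}^AA)$ be the inverse of $\omega$ and define
\[
\{F,G\}:=\Pi_\omega(\mathrm{d}^{\mathcal{T}}F,\mathrm{d}^{\mathcal{T}}G),\qquad F,G\in C^\infty(A).
\]
Because $\mathrm{d}^{\mathcal{T}}F=(\rho^1)^*\mathrm{d}F$, this is the bracket of the bivector $\wedge^2\rho^1(\Pi_\omega)$ on $A$, and its Hamiltonian vector fields are $X_F=\rho^1(\Pi_\omega^\sharp\mathrm{d}^{\mathcal{T}}F)$. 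Closedness of $\omega$ is equivalent to $[\Pi_\omega,\Pi_\omega]=0$ for the algebroid Schouten bracket. Since $\rho^1$ is a Lie algebroid morphism into $TA$, it intertwines Schouten brackets, so $\wedge^2\rho^1(\Pi_\omega)$ is Poisson. Combining with Step 2 and the first paragraph, $X_{E_L+p^*f}=\rho^1(\Gamma_\omega+\text{vertical})$ is a semispray. Varying $\beta$ gives the family of brackets.

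The main obstacle is Step 1: constructing the closed sections $\beta$ vanishing on $V$ globally and checking closedness together with the Lagrangian property of $V$. This is where the homotopy operator of Section~\ref{sec4} and Propositions~\ref{prop1}--\ref{prop2} enter. The descent in Step 3 is comparatively formal; the point that needs care there is that the Schouten compatibility of $\rho^1$ holds for the bracket on the prolongation.
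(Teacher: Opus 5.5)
Your proposal is correct and follows essentially the paper's route. The paper also takes $\Omega=\Theta+\omega_L$ with $\Theta$ closed and horizontal. It corrects the $\omega_L$-Hamiltonian SODE $\sigma_{E_L}$ by a vertical section $Z$, obtained through the isomorphism $\omega_L^\flat:\mathrm{Vert}(\A)\to\mathrm{Vert}(\A)^0$. It then descends to $A$ through the anchor, exactly as in your Step 3 (Proposition \ref{poisson-base}).

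Two small remarks:

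\begin{itemize}
\item The displayed comparison in Step 2 should read $\omega_L^\flat(\Gamma_\omega)-\omega_L^\flat(\Gamma_{\omega_L})=-\beta^\flat(\Gamma_\omega)$. As you wrote it, the left side is zero.
\item Step 1 is not actually an obstacle. The section $\beta=\hat{p}^{\,*}\theta$ with $\mathrm{d}_A\theta=0$ is already global, closed and horizontal. So neither the homotopy operator of Section \ref{sec4} nor Propositions \ref{prop1}--\ref{prop2} is needed for existence; they only show that this family exhausts the symplectic sections vanishing on $\mathrm{Vert}(\A)$ with $J(\Omega)=0$.
\end{itemize}
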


One can deduce from Theorem \ref{maintheorem} that Hamiltonian semisprays arise naturally 
from the interaction between the Lie algebroid structure and regular Lagrangians,
giving rise to an intrinsic mechanism for generating second-order Hamiltonian dynamics in the Lie algebroid setting.

The rest of this paper is devoted to the proof of this theorem. 
Before doing so, we provide a coordinate description of the family of Poisson brackets, 
which makes the verification of their properties straightforward.

First, fix a local adapted coordinate system  $(U, x, y)=(U, x^1,\ldots,x^n,y^1\ldots,y^r)$ on $ A$ 
and consider the structure functions $\rho_j^{i},$ $C_{ij}^k$ of the Lie algebroid $A$ given by \eqref{strucfunc}.
Then, define the set of local functions ${M_{ij}}$, $M^{ij}$, and $N_{ij}$ on $A$ by
\[
M_{ij}:=\frac{\partial^2 L}{\partial y^{i}\partial y^{j}}, \ \ \ \   M_{ik}M^{kj}=\delta^{i}_j, 
\quad\text{and}\quad
N_{ij} := \rho_i^{k}\frac{\partial^2 L}{\partial x^{k}\partial y^{j}}-\rho_j^{k}\frac{\partial^2 L}{\partial x^{k}\partial y^{i}}-\frac{\partial L}{\partial y^k}C^k_{ij} + \Theta_{ij}. 
\]
where $\Theta_{ij}$ is any set of functions on $M$ satisfying
\begin{equation}\label{coefftheta}
  \Theta_{ij}(x)=-\Theta_{ji}(x), \qquad  \sum_{\mathrm{cyclic}(i,j,k)}\rho^r_k\frac{\partial \Theta_{ij}}{\partial x^r}-C^r_{jk}\Theta_{ri}=0.
\end{equation}

Now, consider the Poisson bracket on $A$ locally defined by the relations

\begin{equation}\label{coor-poissbracket}
\{  x^{i},x^{j} \}=0  ,\ \ \ \  \{  x^{i},y^{k} \}= -\rho^{i}_rM^{rk} ,\ \ \    \{  y^{k},y^{l} \}= -M^{kr}N_{rs}M^{sl}.
\end{equation}
The Hamiltonian vector field of the function
$$G(x,y)= E_L(x,y) +f(x)  = \frac{\partial L}{\partial y^k}y^k-L(x,y) + f(x) $$
is the vector field
\[
\mathcal{V}(x,y) = y^a \rho^i_a \frac{\partial}{\partial x^i} - M^{sk}\left( \rho^i_k \frac{\partial G}{\partial x^i} + N_{jk} y^j \right) \frac{\partial}{\partial y^s},
\]
which is a semispray on $A$.

In other words, given a regular Lagrangian $L$, the family of Poisson brackets in Theorem \ref{maintheorem} is given by \eqref{coor-poissbracket} 
and is such that the Hamiltonian vector field of the function $G(x,y)$ is a semispray.
This family is parameterized by the set of functions $\Theta_{ij}$ satisfying the conditions in \eqref{coefftheta}, 
which has the following geometric interpretation: 
the functions $\Theta_{ij}$ are the local coefficients of a closed section $\theta\in \Gamma(\wedge^2 A^*)$.
Indeed, as a consequence of Proposition \ref{prop2}, Corollary \ref{cor-hor-clos} and Theorem \ref{maintheorem}, we obtain that, 
for a given regular Lagrangian $L \in C^{\infty}(A)$, the family of Poisson brackets in Theorem  in \ref{maintheorem}  
can be naturally identified with the set of closed sections $\theta\in \Gamma(\wedge^2 A^*)$;
a correspondece that will be developed geometrically in Section 7.

We also notice that this procedure allows us to construct Hamiltonian sprays:
for the regular Lagrangian $L$ given by the quadratic form associated with any pseudo-Riemannian metric on the Lie algebroid,
setting $\Theta_{ij}=0$ and $f=0$ implies that $\mathcal{V}$ is a spray
(see Example \ref{ex:Spray} for more details).

The proof of the previous theorem follows from the statements:

\begin{itemize}

\item[(i)]  The set of semisprays on the Lie algebroid $(p: A \rightarrow M, [ \cdot, \cdot ], \rho)$ 
are in one-to-one correspondence with the so-called \emph{second-order differential equations}, 
which are sections of the prolongation of the Lie algebroid $A$.
Indeed, the anchor map of the prolongation maps every of second-order differential equation into a semispray on $A$.

\item[(ii)] One can characterize a family of \emph{symplectic sections} on the prolongation of the Lie algebroid  $(p: A \rightarrow M, [ \cdot, \cdot ], \rho)$ 
with the property that at least one of its Hamiltonian sections is a second-order differential equation. 

\item[(iii)]  Each member of the family of Poisson structures on $A$ in Theorem \ref{maintheorem} 
is induced by a symplectic section of (ii) through the anchor map of the prolongation.
\end{itemize}

For the rest of this paper, we develop the above statements in detail.

\section{Ehresmann Connections and Lie Algebroid Cohomology}\label{sec4}

In this section we establish the cohomological background for the construction of
the family of Poisson structures in Theorem \ref{maintheorem}.
More precisely, as we show in Proposition \ref{vanishprop}, 
for any choice of an Ehresmann connection on the prolongation of a Lie algebroid,
the resulting vertical differential is cohomologically trivial in positive degrees.
This readily implies Proposition \ref{poincare_lemma}, a sort of Poincaré Lemma for the vertical cohomology of the prolongation
that allows to globalize the constructions presented in Section \ref{sec:vanish}.

To highlight the intrinsic geometric nature of this construction and provide tools that may be of independent interest,
this section is developed in a slightly more general framework.
The proof of Proposition \ref{vanishprop} is given via an explicit construction of homotopy operators for the (isomorphic) Lie algebroid cohomology 
of a pullback vector bundle with coefficients in a Bott-type representation.
For further developments on connections on the prolongation of a Lie algebroid, see \cite{popescu2008}.

\paragraph{Lie Algebroid Cohomology and Representations.}
Let $\A\to E$ be a Lie algebroid with anchor map $\varrho:\A\to TE$ and Lie bracket $[\dcd]$.
Recall that its \emph{de Rham differential} 
$\d_\A:\Gamma(\wedge^{\bullet}\A^*)\to\Gamma(\wedge^{\bullet}\A^*)$
is the graded derivation
given by the standard Koszul formula 
\begin{eqnarray}
  \d_\A\omega(\sigma_0,\ldots,\sigma_k) &:= & \sum_{i=0}^{k}(-1)^i\mathcal{L}_{\varrho(\sigma_i)}(\omega(\sigma_0,\ldots,\hat{\sigma}_i,\ldots,\sigma_k))\nonumber \\
  && + \sum_{0\leqslant i<j \leqslant k}(-1)^{i+j}\omega([\sigma_i,\sigma_j],\sigma_0,\ldots,\hat{\sigma}_i,\ldots,\hat{\sigma}_j,\ldots,\sigma_k),\label{diff-ext-operator}
\end{eqnarray}
where $\omega\in\Gamma(\wedge^k\A^*)$ and $\sigma_0,\ldots,\sigma_k\in\Gamma(\A)$.
Moreover, we have $\d_\A^2=0$, and the cohomology of the cochain complex $(\Gamma(\wedge^{\bullet}\A^*),\d_\A)$ 
is the \emph{cohomology of the Lie algebroid} $\A$, which we denote by $H^\bullet(\A)$.
More generally, given a representation $\nabla$ of $\A$ on a vector bundle $F\to E$, that is, a flat $\A$-connection on $F$,
we get the cochain complex $(\Omega^\bullet(\A;F),\d_\nabla)$
where $\Omega^\bullet(\A;F):=\Gamma(\wedge^{\bullet}\A^*\otimes F)$ and $\d_\nabla$ is given by
\begin{eqnarray}
  \d_\nabla\lambda(\sigma_0,\ldots,\sigma_k) &:= & \sum_{i=0}^{k}(-1)^i\nabla_{\sigma_i}(\lambda(\sigma_0,\ldots,\hat{\sigma}_i,\ldots,\sigma_k))\nonumber \\
 & & + \sum_{0\leqslant i<j \leqslant k}(-1)^{i+j}\lambda([\sigma_i,\sigma_j],\sigma_0,\ldots,\hat{\sigma}_i,\ldots,\hat{\sigma}_j,\ldots,\sigma_k),\label{diffoper-connection}
\end{eqnarray}
for $\lambda\in\Omega^k(\A;F)$ and $\sigma_0,\ldots,\sigma_k\in\Gamma(\A)$ 
\cite[Chapter 7]{Mackenzie2005}.
Its cohomology is called the \emph{cohomology of the Lie algebroid} $\A$ \emph{with coefficients in the representation} $\nabla$ on $F$,
and is denoted by $H^\bullet(\A;F)$.

\subsection{Cohomology of Vertical Subbundles}

Let $E\overset{\pi}{\to}B$ be a vector bundle and $VE\subset TE$ its vertical distribution: $VE=\ker\d\pi$.
Since $VE$ is involutive, it inherits a Lie algebroid structure with the standard Lie bracket of tangent vector fields and inclusion as anchor.
In this part, we describe the Lie algebroid cohomology of $VE$ with coefficients in any representation.
We base on the fact that $VE$ is isomorphic to the pullback bundle $\pi^*E\to E$ of $E$ along $\pi: E\to B$,
which is equipped with canonical structures useful in the computation of the cohomology.

Recall that the fiber of the pullback bundle $\pi^*E\to E$ at $v\in E$ is just the fiber of $E$ at $\pi(v)$: $(\pi^*E)_v=E_{\pi(v)}$.
Moreover, $\pi^*E$ is generated by the pullback sections $\pi^*s\in\Gamma(\pi^*E)$, $s\in\Gamma(E)$,
given by $\pi^*s(v):=s(\pi(v))$.
The \emph{vertical lift} is the map
\begin{equation}\label{eq:vertlift1}
  \rho: \pi^*E \to TE, \qquad (v_0,v)\mapsto \rho(v_0,v):=v_{v_0}^{\vee}, 
\end{equation}
where $v\in E$, $v_0\in E_{\pi(v)}$, and $v_{v_0}^{\vee}\in VE_{v_0}$ is the tangent vector at $v_0$ to the curve $t\mapsto v_0+t v$:
\[  v_{v_0}^{\vee}F := \left.\tfrac{\mathrm{d}}{\mathrm{d}t}\right|_{t=0}F(v_0+t v), \qquad F\in C^\infty(E). \]

The pullback bundle $\pi^*E\to E$ is endowed with the following natural structures:
\begin{itemize}
\item A \emph{Lie algebroid structure} with the vertical lift $\rho:\pi^*E\to TE$ as anchor 
    and the Lie bracket $\brk$ on $\Gamma(\pi^*E)$ characterized by $[\pi^*s_1,\pi^*s_2]:=0$ for $s_1,s_2\in\Gamma(E)$
\item A canonical representation $\nabla$ on $\pi^*E$ satisfying $\nabla_{X}\pi^*s := 0$ for all $X\in\Gamma(\pi^*E), s\in\Gamma(E)$
\item The \emph{Euler section} $\e\in\Gamma(\pi^*E)$, given on $v\in E$ by $\e(v):=v$,
where the target $v$ is viewed in the fiber $(\pi^*E)_v=E_{\pi(v)}$
\end{itemize}
Since pullback sections generate $\pi^*E$, the Lie bracket $\brk$ and the representation $\nabla$ are well defined,
extending to arbitrary sections by $\R$-linearity and the Leibniz rule.
The vanishing of $\brk$ and $\nabla$ on pullback sections imply the flatness of $\nabla$.
Note that $\rho:\pi^*E\overset{\sim}{\to}VE$ is a Lie algebroid isomorphism mapping $\e$ to the Euler vector field on $E$,
implying that $[\e,\pi^*s]=-\pi^*s$ for all $s\in\Gamma(E)$.

\paragraph{Representations of $\bm{\pi^*E}$.}
Let $\nabla$ be a representation of $\pi^*E$ on a vector bundle $F\to E$,
with associated cochain complex $\Omega^\bullet(\pi^*E;F):=\Gamma(\wedge^\bullet\pi^*E^*\otimes F)$ of differential $\d_\nabla$.
There is an induced $\pi^*E$-connection on $\wedge^\bullet\pi^*E^*\otimes F$ 
given on $\omega\in\Omega^k(\pi^*E;F)$, and for $X,u_1, \dots, u_k \in \Gamma(\pi^*E)$ by
\begin{equation}
    (\nabla_X \omega)(u_1, \dots, u_k) = \nabla_X \big( \omega(u_1, \dots, u_k) \big) - \sum_{i=1}^k \omega(u_1, \dots, \nabla_X u_i, \dots, u_k).
\end{equation}
Here, $\nabla$ simultaneously denotes the induced connection
on $\wedge^\bullet\pi^*E^*\otimes F$, the representation of $\pi^*E$ on $F$, and
the canonical flat $\pi^*E$-connection on $\pi^*E$; the context makes clear which one is meant.

Using the insertion operator we define the Lie derivative $\Lie_\e: \Omega^\bullet(\pi^*E; F) \to \Omega^\bullet(\pi^*E; F)$ along $\e$ 
via Cartan's magic formula $\Lie_\e := i_\e \d_\nabla + \d_\nabla i_\e$.
It is straightforward to check that $\Lie_\e$ satisfies
\begin{equation}\label{eq:Lie}
  \Lie_\e \omega = \nabla_\e \omega + k \omega, \qquad \omega\in\Omega^k(\pi^*E;F).
\end{equation}

\paragraph{Scalling Cochain Map.}

For fixed $v \in E$ and $t \in (0, 1]$, consider the $\pi^*E$-path $a: [t, 1] \to \pi^*E$
over the the radial curve $\gamma(s) = sv$ on $E$ given by $a(s) = \frac{1}{s} \e(\gamma(s))$.
Then, a parallel transport operator $\mathcal{P}_t: F_{tv} \to F_v$ is defined by means of
the solution $\sigma: [t,1] \to F$ to the $\pi^*E$-covariant differential equation 
\[
\nabla_{a} \sigma = 0, \qquad \text{with initial condition } \quad \sigma(t) = \sigma_0 \in F_{tv},
\]
as $\mathcal{P}_t(\sigma_0) := \sigma(1)$.
This is a canonical isomorphism of vector spaces, that is, is independent of the choice of the path $a(s)$,
due to the flatness of $\nabla$.
Furthermore, since $a(s) = \frac{1}{s} \e(\gamma(s)) = \frac{1}{s}(sv) = v$, we have that
$a(s)$, as well as $\mathcal{P}_t$, are well defined and smooth at $t=0$.

Let $\mu_t: E \to E$ be the radial scaling map $\mu_t(v) := tv$, $t \in (0, 1]$
and consider the Lie algebroid morphism $\Phi_t: \pi^*E \to \pi^*E$ covering $\mu_t$, defined as
\begin{equation*}
    \Phi_t(u) := tu, \qquad u \in (\pi^*E)_v,
\end{equation*}
where the result $tu$ is viewed as an element of $(\pi^*E)_{tv} = E_{\pi(v)}$.
This induces a map $\Psi_t^*: \Omega^k(\pi^*E; F) \to \Omega^k(\pi^*E; F)$
given for $\omega \in \Omega^k(\pi^*E; F)$ and $u_1, \dots, u_k \in (\pi^*E)_v$ by
\begin{equation}\label{eq:Psi}
    (\Psi_t^* \omega)_v(u_1, \dots, u_k) := \mathcal{P}_t\Big(\omega_{tv}(\Phi_t u_1, \dots, \Phi_t u_k)\Big) 
    = t^k \mathcal{P}_t\Big(\omega_{tv}(u_1, \dots, u_k)\Big),
\end{equation}
where the second equality uses $\Phi_t u_i = tu_i$.
Note that, since $\Phi_t$ is a Lie algebroid morphism, 
and $\mathcal{P}_t$ is defined by $\nabla$-parallel transport along a $\pi^*E$-path,
$\Psi_t^*$ commutes with the differential $\d_\nabla$:
\begin{equation}\label{eq:commutation}
    \d_\nabla \Psi_t^* \omega = \Psi_t^* \d_\nabla \omega
\end{equation}

\begin{lemma}\label{lemma:Lie_derivative}
The map $\Psi_t^*: \Omega^k(\pi^*E; F) \to \Omega^k(\pi^*E; F)$ satisfies the evolution equation
\begin{equation}
    \frac{d}{dt} \Psi_t^* \omega = \frac{1}{t} \Psi_t^* (\Lie_\e \omega),
    \qquad
    \omega \in \Omega^k(\pi^*E; F), \quad t \in (0, 1].
\end{equation}
\end{lemma}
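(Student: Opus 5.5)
Since the identity is pointwise in $v\in E$, the plan is to reduce it to the flow property $\Psi_s^*\Psi_t^*=\Psi_{st}^*$ and then differentiate at $s=1$. Once the semigroup law is available, we get
\[
\frac{d}{dt}\Psi_t^*\omega=\frac{d}{ds}\Big|_{s=1}\Psi_{st}^*\omega\cdot\frac{1}{t}
=\frac1t\frac{d}{ds}\Big|_{s=1}\Psi_s^*(\Psi_t^*\omega).
\]
The remaining task is then to show that the infinitesimal generator $\frac{d}{ds}|_{s=1}\Psi_s^*$ equals $\Lie_\e$. We must also check that $\Lie_\e$ commutes with $\Psi_t^*$, which lets us rewrite $\Lie_\e\Psi_t^*\omega$ as $\Psi_t^*\Lie_\e\omega$.

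\textbf{Semigroup law.} We have $\mu_s\circ\mu_t=\mu_{st}$ and $\Phi_s\circ\Phi_t=\Phi_{st}$. For parallel transport, $\mathcal P_s$ goes from $F_{sv}$ to $F_v$ and $\mathcal P_t$ (taken at the point $sv$) goes from $F_{tsv}$ to $F_{sv}$. Since the path $a(r)\equiv v$ is constant and covers the radial segment, concatenating the two segments gives $\mathcal P_s\circ\mathcal P_t^{(sv)}=\mathcal P_{st}^{(v)}$. To see this, reparametrize: along $r\mapsto rsv$ the $\pi^*E$-path is $sv/(rs)\cdot$, which equals $v$ again. Alternatively, it follows from flatness and path-independence as stated before the lemma. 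Substituting into \eqref{eq:Psi} yields $\Psi_s^*\Psi_t^*=\Psi_{st}^*$.

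\textbf{Generator at $s=1$.} Here we use \eqref{eq:Psi}:
\[
(\Psi_s^*\omega)_v(u_1,\dots,u_k)=s^k\mathcal P_s(\omega_{sv}(u_1,\dots,u_k)).
\]
Differentiating at $s=1$, the factor $s^k$ contributes $k\,\omega$. The term $\frac{d}{ds}|_{s=1}\mathcal P_s(\sigma(sv))$, applied to the section $\sigma=\omega(\pi^*u_1,\dots,\pi^*u_k)$ (extending the $u_i$ as pullback sections, which are $\nabla$-parallel for the canonical connection on $\pi^*E$), is the covariant derivative of $\sigma$ along the curve $s\mapsto sv$ in the direction of the $\pi^*E$-element $a(1)=v=\e(v)$. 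This follows from the standard fact that $\frac{d}{ds}\mathcal P_s\sigma(\gamma(s))|_{s=1}=\nabla_{a(1)}\sigma$; note the sign, since the transport runs back toward $v$ and $\gamma$ arrives at $v$ at $s=1$. Because the $\pi^*u_i$ are parallel, this is exactly $(\nabla_\e\omega)(u_1,\dots,u_k)$. Combining this with \eqref{eq:Lie} gives generator $=\nabla_\e\omega+k\omega=\Lie_\e\omega$.

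\textbf{Commutation.} $\Lie_\e=i_\e\d_\nabla+\d_\nabla i_\e$, and $\Psi_t^*$ commutes with $\d_\nabla$ by \eqref{eq:commutation}. It therefore suffices to check $i_\e\Psi_t^*=\Psi_t^*i_\e$. At $v$, $\Phi_t(\e(v))=tv$, which is $\e(tv)$ viewed in $(\pi^*E)_{tv}$. Hence $\Phi_t$ maps $\e$ to $\e$, and the identity follows directly from the first expression in \eqref{eq:Psi}. Alternatively, the commutation follows from the semigroup law itself, since the $\Psi_s^*$ commute with each other; this gives $\frac{d}{ds}|_{s=1}\Psi_{s}^*\Psi_t^*=\frac{d}{ds}|_{s=1}\Psi_t^*\Psi_s^*=\Psi_t^*\Lie_\e$, provided differentiation passes through the linear pointwise map $\Psi_t^*$, which is justified by smoothness.

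The main obstacle is the generator computation. It requires care with the identification of derivatives of parallel transport with $\nabla_{a}$ and with the sign and orientation conventions, and this step is where the factor $1/t$ ultimately originates.
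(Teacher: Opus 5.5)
Your argument is correct, but it is organized differently from the paper's proof.

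\textbf{The paper's route.} The paper differentiates \eqref{eq:Psi} directly at an arbitrary $t$. The factor $t^k$ contributes $\frac{k}{t}\Psi_t^*\omega$. The derivative of $t\mapsto\mathcal{P}_t(\sigma(tv))$ is $\mathcal{P}_t\big((\nabla_{a(t)}\sigma)(tv)\big)$ with $a(t)=\frac1t\e(tv)$, and this contributes $\frac1t\Psi_t^*\nabla_\e\omega$. Then \eqref{eq:Lie} finishes the proof. Here the $1/t$ comes directly from the normalization of the radial $\pi^*E$-path, and no semigroup law or commutation property is needed.

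\textbf{Your route.} You do the same computation only at $s=1$, where $a(1)=\e(v)$ and no rescaling occurs, to identify the generator $\frac{d}{ds}\big|_{s=1}\Psi_s^*=\Lie_\e$. You then carry this to arbitrary $t$ using $\Psi_s^*\Psi_t^*=\Psi_{st}^*$ and the chain rule. This shows that $\Psi_t^*$ is the flow of $\Lie_\e$ in logarithmic time. That explains both the $1/t$ and the fact that $\Lie_\e$ commutes with $\Psi_t^*$. The price is that you must verify the concatenation/reparametrization property of $\mathcal{P}_t$.

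\textbf{Minor points.}
\begin{itemize}
\item Your sentence on the semigroup law is garbled. The $\pi^*E$-path over $r\mapsto rsv$ is $\frac1r\e(rsv)=sv$, and after the substitution $r'=sr$ it becomes the constant path $v$ over $r'\mapsto r'v$, so uniqueness of parallel transport gives $\mathcal{P}_s\circ\mathcal{P}_t^{(sv)}=\mathcal{P}_{st}$.
\item Of your two treatments of the commutation, the second is cleaner. Write $\Psi_{st}^*=\Psi_t^*\Psi_s^*$ and pull the fibrewise linear map $\Psi_t^*$ out of the $s$-derivative; this gives $\frac1t\Psi_t^*\Lie_\e\omega$ directly, without \eqref{eq:commutation} or the check $i_\e\Psi_t^*=\Psi_t^*i_\e$.
\item In your scheme the $1/t$ comes from the chain rule $\frac{d}{ds}\big|_{s=1}\Psi_{st}^*=t\,\frac{d}{dt}\Psi_t^*$, not from the generator computation as your last sentence says.
\item The paper defines $\Psi_s^*$ only for $s\le 1$, so the derivative at $s=1$ should be read as one-sided, or $\mathcal{P}_s$ extended to $s>1$. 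This is harmless by smoothness.
\end{itemize}
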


\begin{proof}
By differentiating the right-hand side of \eqref{eq:Psi}, we obtain from the product rule that
\begin{equation*}
\frac{d}{dt} \left((\Psi_t^*\omega)_v(u_1, \ldots, u_k)\right) = \frac{k}{t} (\Psi_t^* \omega)_v(u_1, \dots, u_k) + \frac{1}{t} (\Psi_t^* \nabla_\e \omega)_v(u_1, \dots, u_k),
\end{equation*}
where we have applied $\frac{d}{dt}\mathcal{P}_t(\sigma_0) = \mathcal{P}_t\left(\nabla_{a(t)} \sigma_0\right) = \frac{1}{t}\mathcal{P}_t\left(\nabla_\e \sigma_0\right)$, 
obtained from the definition of parallel transport.
By \eqref{eq:Lie}, we get the desired evolution equation.
\end{proof}

\paragraph{Homotopy and Cohomology.}

Given $k\geq0$, define $h_k:\Omega^k(\pi^*E; F) \to \Omega^{k-1}(\pi^*E; F)$ by 
\begin{equation}\label{eq:homot_op}
h_k \omega := \int_0^1 \frac{1}{t} \Psi_t^* (i_\e \omega) \, dt. 
\end{equation}
To see that this defines a smooth $F$-valued section, note that $h_0$ is the zero map, 
and for $k > 0$, $v \in E$ and $u_1, \dots, u_{k-1} \in (\pi^*E)_v$ that
\[ 
(\Psi_t^* i_\e \omega)_v(u_1, \dots, u_{k-1}) = t^{k-1} \mathcal{P}_t \Big( \omega_{tv}(tv, u_1, \dots, u_{k-1}) \Big) = 
t^k \mathcal{P}_t \Big( \omega_{tv}(v, u_1, \dots, u_{k-1}) \Big),
\]
so the integrand is exactly $t^{k-1} \mathcal{P}_t \Big( \omega_{tv}(v, u_1, \dots, u_{k-1}) \Big)$, 
which is smooth and bounded near $t=0$.

\begin{proposition}\label{prop:homotopy}
The maps $h_k$ in \eqref{eq:homot_op}
give a homotopy between the identity and $\Psi_0^* := \lim_{t \to 0} \Psi_t^*$:
    \[h_{k+1} \circ \d_\nabla  + \d_\nabla \circ h_k = \mathrm{id} - \Psi_0^*\]
Moreover, for $k > 0$, we have $h_{k+1} \circ \d_\nabla  + \d_\nabla \circ h_k = \mathrm{id}$, 
and, for $k=0$, we get $h_1 \circ \d_\nabla = \mathrm{id} - \mathcal{P}_0\circ p^*\circ\iota^*$,
where $\iota: B \to E$ is the zero section of $E$, and $p: E \to B$ is the bundle projection.
\end{proposition}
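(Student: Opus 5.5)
The argument is the standard one for the homotopy formula of the Poincaré lemma, run with the scaling maps $\Psi_t^*$. Fix $\omega\in\Omega^k(\pi^*E;F)$ and $0<\varepsilon\le 1$. Lemma \ref{lemma:Lie_derivative} together with Cartan's formula $\Lie_\e=i_\e\d_\nabla+\d_\nabla i_\e$ and the commutation relation \eqref{eq:commutation} gives
\[
\omega-\Psi_\varepsilon^*\omega=\int_\varepsilon^1\frac{d}{dt}\Psi_t^*\omega\,dt
=\int_\varepsilon^1\frac1t\Psi_t^*(i_\e\d_\nabla\omega)\,dt+\d_\nabla\int_\varepsilon^1\frac1t\Psi_t^*(i_\e\omega)\,dt,
\]
using $\Psi_1^*=\mathrm{id}$. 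In the second term I pull $\d_\nabla$ out of the integral. This is legitimate because $\d_\nabla$ is a first-order differential operator in the $E$-variables, and the integrand is smooth jointly in $(t,v)$ on $[\varepsilon,1]\times E$, so differentiation under the integral sign applies. Letting $\varepsilon\to0$ then gives $h_{k+1}\d_\nabla\omega+\d_\nabla h_k\omega=\omega-\Psi_0^*\omega$.

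The convergence of the integrals is the point that needs care. As computed after \eqref{eq:homot_op}, the integrand of $h_k$ at $v$ equals $t^{k-1}\mathcal P_t(\omega_{tv}(v,u_1,\dots))$. Since $a(s)\equiv v$, the transport $\mathcal P_t$ solves an ODE whose coefficients are smooth in $(s,v)$ up to $s=0$, so this expression extends smoothly to $t\in[0,1]$. Hence $h_k\omega$ is smooth, and the same smooth extension justifies commuting $\d_\nabla$ with $\int_0^1$. The same formula shows $\Psi_t^*\omega\to\Psi_0^*\omega$ smoothly.

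It remains to identify $\Psi_0^*$ via \eqref{eq:Psi}, $(\Psi_t^*\omega)_v(u_1,\dots,u_k)=t^k\mathcal P_t(\omega_{tv}(u_1,\dots,u_k))$. For $k>0$ the factor $t^k\to0$ while $\mathcal P_t(\omega_{tv}(\cdots))$ stays bounded, so $\Psi_0^*=0$. This yields $h_{k+1}\d_\nabla+\d_\nabla h_k=\mathrm{id}$. For $k=0$ we have $h_0=0$, and $(\Psi_0^*\omega)_v=\mathcal P_0(\omega_{0_{\pi(v)}})$. Here $\omega_{0_{\pi(v)}}$ is exactly $p^*\iota^*\omega$ evaluated at $v$, as a section of $p^*\iota^*F$, and $\mathcal P_0:F_{0_{\pi(v)}}\to F_v$ is parallel transport along the radial path from the zero section. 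This gives $h_1\d_\nabla=\mathrm{id}-\mathcal P_0\circ p^*\circ\iota^*$.

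The main obstacle is the regularity at $t=0$: justifying that $\mathcal P_t$ (and its $v$-derivatives) extend smoothly to $t=0$, so that the limits and the interchange of $\d_\nabla$ with the integral are valid. I would handle this through smooth dependence of ODE solutions on parameters, using the reparametrized path $a(s)=v$, which is constant and nondegenerate.
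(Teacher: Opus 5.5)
Your proposal is correct and is essentially the paper's proof. Like the paper, you combine Lemma \ref{lemma:Lie_derivative}, Cartan's formula and the commutation relation \eqref{eq:commutation} to write $\frac{d}{dt}\Psi_t^*\omega=\frac{1}{t}\Psi_t^*(i_\e\mathrm{d}_\nabla\omega)+\mathrm{d}_\nabla\frac{1}{t}\Psi_t^*(i_\e\omega)$, integrate over $(0,1]$, and read off $\Psi_0^*$ from the factor $t^k$ in \eqref{eq:Psi}; the only difference is that you spell out the analysis at $t=0$ (truncation at $\varepsilon$, smooth dependence of $\mathcal{P}_t$ on $(t,v)$ via the constant path $a(s)=v$, and moving $\mathrm{d}_\nabla$ through the integral), which the paper compresses into the remark that the integral converges.
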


\begin{proof}
Given $\omega \in \Omega^k(\pi^*E; F)$, by \eqref{eq:commutation}, 
and the evolution equation of Lemma \ref{lemma:Lie_derivative}, we get
    \begin{align*}
        h_{k+1} (\d_\nabla \omega) + \d_\nabla (h_k \omega) 
        &= \int_0^1 \frac{1}{t} \Big[\Psi_t^* i_\e \d_\nabla \omega + \d_\nabla \Psi_t^* i_\e \omega \Big] \, dt \\
        &= \int_0^1 \frac{1}{t} \Psi_t^* (\Lie_\e \omega) \, dt
        = \int_0^1 \frac{d}{dt} (\Psi_t^* \omega) \, dt 
        = \Psi_1^* \omega - \lim_{t \to 0} \Psi_t^* \omega.
    \end{align*}
At $t=1$, $\Psi_1^*$ is the identity operator. Since the integral converges, $\lim_{t \to 0} \Psi_t^* \omega$ exists. Furthermore:
\begin{itemize}
    \item For $k > 0$, we get $\lim_{t\to0}(\Psi_t^* \omega)_v = \lim_{t\to0}t^k \cdot \mathcal{P}_t \circ \omega_{tv} = 0$.
    \item For $k = 0$, we have $\omega \in \Omega^0(\pi^*E; F) = \Gamma(F)$,
          so $\lim_{t\to0}(\Psi_t^* \omega)_v = \lim_{t\to0} \mathcal{P}_t(\omega_{tv}) = \mathcal{P}_0(\omega_{0_{\pi(v)}})$, 
          which is the parallel transport of $\omega$ along the radial ray from $0_{\pi(v)}$ to $v$.
\end{itemize}
\end{proof}

Consequently, the Lie algebroid cohomology vanishes in strictly positive degrees, 
and the zeroth cohomology consists of parallel sections along the fibers:

\begin{corollary}\label{cor:vanish_piE}
The Lie algebroid cohomology of $\pi^*E\to E$ with coefficients in $F\to E$ satisfies
\begin{equation*}
    H^k(\pi^*E; F) = 0 \quad \text{for } k > 0, \quad \text{and} \quad H^0(\pi^*E; F) \cong \Gamma(\iota^*F).
\end{equation*}
\end{corollary}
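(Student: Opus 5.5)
The plan is to read off both claims directly from the homotopy formula of Proposition \ref{prop:homotopy}.

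\emph{Positive degrees.} Let $k>0$ and let $\omega\in\Omega^k(\pi^*E;F)$ be a cocycle, $\d_\nabla\omega=0$. By Proposition \ref{prop:homotopy},
\[
\omega = h_{k+1}\d_\nabla\omega + \d_\nabla h_k\omega = \d_\nabla(h_k\omega).
\]
So $\omega$ is exact, with explicit primitive $h_k\omega$, which is a smooth section by the remark following \eqref{eq:homot_op}. Hence $H^k(\pi^*E;F)=0$.

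\emph{Degree zero.} Here $H^0(\pi^*E;F)=\ker\d_\nabla$ is the space of flat sections $\omega\in\Gamma(F)$. I will show that restriction to the zero section, $\omega\mapsto\iota^*\omega\in\Gamma(\iota^*F)$, is a linear isomorphism.

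For injectivity, take $\omega$ flat. The $k=0$ identity gives $\omega=\mathcal{P}_0(\omega_{0_{\pi(v)}})$ at each $v$, so $\omega$ is determined by $\iota^*\omega$.

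For surjectivity, take $s\in\Gamma(\iota^*F)$ and define $\tilde s_v:=\mathcal{P}_0(s_{\pi(v)})$, where $\mathcal{P}_0:F_{0_{\pi(v)}}\to F_v$ is parallel transport along the radial $\pi^*E$-path $a\equiv v$ over $\gamma(s)=sv$, $s\in[0,1]$.
\begin{itemize}
\item $\tilde s$ is smooth, because solutions of the linear ODE defining parallel transport depend smoothly on $v$. This is the same smoothness at $t=0$ used in defining $h_k$.
\item $\tilde s$ restricts to $s$ on the zero section, since there the path is constant.
\item $\tilde s$ is flat. Using the formula $\tilde s=\mathcal{P}_0\circ p^*\circ\iota^*\tilde s$ itself, one checks that $\nabla_X\tilde s=0$ for every $X\in\Gamma(\pi^*E)$. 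It suffices to take $X=\pi^*u$, since these span.
\end{itemize}

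\emph{Main obstacle: flatness of $\tilde s$.} Flatness of $\nabla$ gives path-independence within a fiber $E_b$, which is an orbit of $\pi^*E$ because the anchor is the vertical lift. Hence $\tilde s_v$ equals the transport of $s_b$ along any $\pi^*E$-path in $E_b$ from $0_b$ to $v$. Now move along the integral curve of $\rho(\pi^*u)$ from $v$ to $v+\epsilon u$. Concatenating this segment with the radial path to $v$ shows that $\tilde s$ is parallel along it, so $\nabla_{\pi^*u}\tilde s=0$.

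If this path-independence argument is awkward to make rigorous, the fallback is to work in a local flat frame of $F$ over $\pi^{-1}(U)$. Such a frame exists because each fiber $E_b$ is contractible and $\nabla$ is flat, so it can be built by radial transport, and in it flat sections are exactly those whose coefficients are constant along the fibers.

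Combining the two directions, $H^0(\pi^*E;F)\cong\Gamma(\iota^*F)$.
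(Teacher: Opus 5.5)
Your proposal is correct and follows the paper's route. The homotopy identity of Proposition~\ref{prop:homotopy} gives exactness in positive degrees, and in degree zero the identity $\omega=\mathcal{P}_0(p^*\iota^*\omega)$ for flat $\omega$ identifies $H^0$ with $\Gamma(\iota^*F)$ via restriction, with inverse the parallel extension $\mathcal{P}_0\circ p^*$.

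The one place you go further is surjectivity: you check that the parallel extension is smooth and flat, using path-independence of transport in the simply connected fibers, whereas the paper only asserts it. Your fallback of a flat frame ``built by radial transport'' would be circular, since the flatness of that frame is exactly what needs proving, but your main argument does not need it.
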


\begin{proof}
If $\omega \in \Omega^k(\pi^*E; F)$ is $\d_\nabla$-closed ($\d_\nabla \omega = 0$), then
it follows from Proposition \ref{prop:homotopy} that
\begin{equation*}
    \omega = \d_\nabla (h_k \omega) + \Psi_0^* \omega.
\end{equation*}
If $k > 0$, then $\Psi_0^* \omega = 0$, so $\omega$ is $\d_\nabla$-exact, and thus $H^k(\pi^*E; F) = 0$.
For $k=0$, we have $h_0=0$, and hence $\omega=\mathcal{P}_0(p^*\iota^*\omega)$, so
$\omega$ is the parallel extension of its restriction to the zero section.
Thus, the $\d_\nabla$-closed sections of $F$ are the parallel sections along the fibers of $E$.
The restriction $\Gamma(F) \to \Gamma(\iota^*F)$ induces the isomorphism $i^*: H^0(\pi^*E; F) \to \Gamma(\iota^*F)$, with inverse the parallel extension $\mathcal{P}_0\circ p^*$.
\end{proof}

\subsection{Bott Connections and Bigrading of the de Rham Differential}

Let $(\A\to E,\varrho,[\dcd])$ be a Lie algebroid, and consider an involutive subbundle $V\subset\A$ over $E$, that is, $[\Gamma(V),\Gamma(V)]\subset\Gamma(V)$.
Then, the restricted Lie bracket and anchor of $\A$ endow $V$ with a Lie algebroid structure, 
and hence, with its own de Rham differential $\d_V$.

Consider the subbundle $V^\circ\subset\A^*$ given as the annihilator of $V$ in $\A^*$, 
and the exterior algebra $\wedge^{\bullet}V^\circ\subset\wedge^{\bullet}\A^*$.
There exists a Bott-type representation of $V$ on $\wedge^{\bullet}V^\circ$, 
\[  \nabla:\Gamma(V)\times\Gamma(\wedge^{\bullet}V^\circ)\to\Gamma(\wedge^{\bullet}V^\circ), \qquad \nabla_u\alpha := \mathcal{L}_u\alpha,  \quad u\in\Gamma(V), \alpha\in\Gamma(\wedge^{\bullet}V^\circ)\] 
where the Lie derivative $\mathcal{L}_u\alpha$ is defined by the standard Cartan formula
\[  \mathcal{L}_u\alpha := \ii_u\d_A\alpha + \d_A\ii_u\alpha. \]
It is straightforward to check that $\nabla$ is indeed a representation of $V$ on $\wedge^{\bullet}V^\circ$.
In particular, for each $p\in\Z$, we have the cochain complex $(\Gamma(\wedge^\bullet V^*\otimes\wedge^{p}V^\circ),\d^{\nabla}_V)$
of the Lie algebroid $V$ with coefficients in the representation on $\wedge^{p}V^\circ$ induced by the Bott connection $\nabla$.

\paragraph{Vertical Subbundles and Ehresmann Connections.}

Note that the sections of $\wedge^\bullet V^*$ are $C^\infty(E)$-linear forms on $\Gamma(V)$,
which can be extended to sections of $\wedge^\bullet\A^*$ by the choice of a (possibly non-linear) connection.

Given the Lie algebroid $\A$ with involutive subbundle $V$, 
an \emph{Ehresmann connection} is a vector bundle projection $\gamma:\A\to\A$ on $V$, that is, $\gamma^2=\gamma$ and $\im\gamma = V$.
We say that $V$ is the \emph{vertical subbundle} associated with $\gamma$, 
and the \emph{horizontal subbundle} is $H:=\ker\gamma$.
This gives rise to a direct sum decomposition $\A=H\oplus V$,
and the decomposition of the exterior algebra
\[  \wedge^k\A^* = \bigoplus_{p+q=k} \wedge^p V^\circ \otimes \wedge^q H^\circ, \]
where $V^\circ,H^\circ\subset \A^*$ are the annihilators of $V$ and $H$, respectively.
So, each $\omega\in\Gamma(\wedge^k\A^*)$ splits as
\[  \omega = \sum_{p+q=k} \omega_{p,q}, \quad \text{with } \omega_{p,q}\in\Gamma(\wedge^p V^\circ \otimes \wedge^q H^\circ); \]
here, we say that $\omega_{p,q}$ is the component of $\omega$ of \emph{bidegree} $(p,q)$.

Note that the choice of an Ehresmann connection $\gamma$ for $V$ in $\A$
gives rise to a vector bundle isomorphism $H^\circ\cong V^*$.
Indeed, the direction $H^\circ\to V^*$ is given by simple restriction to $V$; 
while the inverse $\gamma^*:V^*\to H^\circ$ is just extending by zero on $H$.
This induces an isomorphism of vector bundles $\wedge^p V^\circ \otimes \wedge^q H^\circ\cong\wedge^q V^*\otimes\wedge^p V^\circ$ for all $p,q\in\Z$.
For each $\omega_{p,q}\in\wedge^p V^\circ \otimes \wedge^q H^\circ$, we denote by
$\bar{\omega}_{p,q}\in\wedge^q V^*\otimes\wedge^p V^\circ$ the corresponding element under this identification.

At the same time, the Ehresmann connection $\gamma$ induces a $\gamma$-dependent bigrading of the de Rham differential $\d_\A$ as
\[
  \d_\A = \dh + \dv + \dR,
\]
where the bidegrees of the components are indicated in the subscripts; that is, we have
\begin{align*}
    \d_{i,j}\big(\Gamma(\wedge^p V^\circ \otimes \wedge^q H^\circ)\big) \subseteq \Gamma(\wedge^{p+i} V^\circ \otimes \wedge^{q+j} H^\circ)
\end{align*}
for $(i,j)\in\{(1,0),(0,1),(2,-1)\}$ and all $p,q\in\Z$.
The coboundary condition $\d_\A^2=0$ translates into the following relations among the components:
\begin{align}
    \dv^2 &= 0, \label{eq:Cob02} \\
    \dh\dv + \dv\dh &= 0, \label{eq:Cob11} \\
    \dv\dR + \dR\dv + \dh^2 &= 0, \label{eq:Cob20} \\
    \dh\dR + \dR\dh &= 0, \label{eq:Cob3-1} \\
    \dR^2 &= 0 \label{eq:Cob4-2}
\end{align}
In particular, note from \eqref{eq:Cob02} that on $\Gamma(\wedge^{p}V^\circ \otimes \wedge^{\bullet}H^\circ , \dv)$
is a cochain complex for each $p\in\Z$.

\begin{lemma}\label{lemma:iso_dv}
Under the identification $\Gamma(\wedge^p V^\circ \otimes \wedge^\bullet H^\circ)\cong\Gamma(\wedge^\bullet V^*\otimes\wedge^p V^\circ)$, 
the coboundary operator $\dv$ corresponds to the de Rham differential $\d_V^\nabla$ of the Lie algebroid $V$ 
with coefficients in the Bott representation $\nabla$ on $\wedge^{p}V^\circ$.
\end{lemma}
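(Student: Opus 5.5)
Let $\omega\in\Gamma(\wedge^p V^\circ\otimes\wedge^q H^\circ)$ and $u_0,\dots,u_q\in\Gamma(V)$. The plan is to evaluate $\dv\omega$ on these arguments and compare with the Koszul formula \eqref{diffoper-connection} for $\d_V^\nabla\bar\omega$ evaluated on $u_0,\dots,u_q$. The identification sends $\omega$ to $\bar\omega$, defined by $\bar\omega(u_1,\dots,u_q)=\ii_{u_q}\cdots\ii_{u_1}\omega$ up to a fixed sign convention, viewed as an element of $\Gamma(\wedge^pV^\circ)$. This works because inserting vertical sections kills exactly the $H^\circ$-factors.

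\textbf{Step 1: isolate $\dv\omega$.} For bidegree reasons, $\dv\omega$ is the component of $\d_\A\omega$ of bidegree $(p,q+1)$. It is therefore determined by its contractions with $q+1$ vertical sections, which land in $\wedge^pV^\circ$. Contracting $\dh\omega$ or $\dR\omega$ with $q+1$ vertical sections gives zero, since these have fewer than $q+1$ factors in $H^\circ$. Hence
\[
\ii_{u_q}\cdots\ii_{u_0}\dv\omega=\ii_{u_q}\cdots\ii_{u_0}\d_\A\omega .
\]

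\textbf{Step 2: expand with Cartan calculus.} I will use the identities $\Lie_u=\ii_u\d_\A+\d_\A\ii_u$ and $[\Lie_u,\ii_w]=\ii_{[u,w]}$, both valid on $\Gamma(\wedge^\bullet\A^*)$. These give the standard iterated formula
\[
\ii_{u_q}\cdots\ii_{u_0}\d_\A\omega=\sum_i(-1)^i\Lie_{u_i}\bigl(\ii\cdots\widehat{\ii_{u_i}}\cdots\omega\bigr)+\sum_{i<j}(-1)^{i+j}\,\ii\cdots\ii_{[u_i,u_j]}\cdots\omega
\]
plus one remaining term, which involves $\d_\A$ applied to a full $(q+1)$-fold contraction of $\omega$. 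This term is zero, because $\omega$ has only $q$ factors in $H^\circ$.

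Each inner contraction $\ii\cdots\widehat{\ii_{u_i}}\cdots\omega$ lies in $\Gamma(\wedge^pV^\circ)$, so the first sum is exactly $\nabla_{u_i}$ applied to $\bar\omega(\dots)$, by the definition of the Bott connection. Since $V$ is involutive, $[u_i,u_j]\in\Gamma(V)$, and the second sum is $\bar\omega([u_i,u_j],\dots)$. Together these reproduce \eqref{diffoper-connection}.

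\textbf{Main obstacle.} The delicate step is justifying the iterated contraction identity together with the vanishing of the $\d_\A$-term of lowest order. I would prove it by induction on $q$. For $q=0$ we have $\ii_{u_0}\d_\A\omega=\Lie_{u_0}\omega$, since $\ii_{u_0}\omega=0$; this also shows that $\Lie_{u_0}$ preserves $\wedge^pV^\circ$ in this case. For the inductive step, commute one $\ii_u$ past $\d_\A$ using Cartan's formula and then apply the hypothesis to the form $\ii_u\omega$, which has bidegree $(p,q-1)$.

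Signs are fixed by the ordering convention in the isomorphism $\wedge^qH^\circ\cong\wedge^qV^*$. I would choose that convention once, at the outset, so that it matches \eqref{diffoper-connection}. Finally, $\d_V^\nabla$ is well defined (the Bott connection is flat) and $\dv^2=0$ by \eqref{eq:Cob02}, so the identification is a cochain isomorphism.
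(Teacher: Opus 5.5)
Your proof is correct, but it takes a somewhat different route from the paper's.

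\textbf{The paper's route.} The paper evaluates $\d_\A\omega_{p,q}$ on $p$ horizontal and $q+1$ vertical sections and inspects the full Koszul formula term by term:
\begin{itemize}
\item terms differentiating along some $h_a$, and terms containing $[h_a,h_b]$, die for bidegree reasons;
\item the $[u_i,u_j]$-terms give the bracket part, by involutivity;
\item the derivative terms $u_i(\cdots)$ recombine with the mixed $[u_i,h_a]$-terms into $(\Lie_{u_i}\,\cdot\,)(h_1,\dots,h_p)$, which is the Bott connection.
\end{itemize}

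\textbf{Your route.} You contract only the $q+1$ vertical arguments and use the iterated Cartan identity built from $\Lie_u=\ii_u\d_\A+\d_\A\ii_u$ and $[\Lie_u,\ii_w]=\ii_{[u,w]}$. The Bott connection then appears directly as $\Lie_{u_i}$ acting on elements of $\Gamma(\wedge^pV^\circ)$, with no evaluation on $H$. The only bidegree facts you need are that contracting $\dh\omega$ or $\dR\omega$ with $q+1$ vertical sections gives zero, and that the remainder $\d_\A(\ii_{u_q}\cdots\ii_{u_0}\omega)$ vanishes.

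\textbf{What each buys.} Your approach costs an induction to establish the iterated identity. In return you get three things:
\begin{itemize}
\item a more intrinsic argument;
\item at $q=0$, a proof that $\Lie_u$ preserves $\Gamma(\wedge^pV^\circ)$, i.e.\ that the Bott connection is well defined, which the paper leaves to the reader;
\item exact sign control.
\end{itemize}
The sign point is not cosmetic. With your vertical-first contraction the correspondence is exactly $\dv\leftrightarrow\d_V^\nabla$. If the horizontal arguments are placed first, one instead gets $\dv\leftrightarrow(-1)^p\d_V^\nabla$. This is already visible for $p=1$, $q=0$, where $\d_\A\alpha(h,u)=-(\Lie_u\alpha)(h)$. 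The discrepancy is harmless for the only use of the lemma, the cohomology vanishing in Proposition~\ref{vanishprop}.
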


This follows by direct inspection in the Koszul formula for $\d_\A\omega_{p,q}$
when evaluated on $p$ sections of $H$ and $q+1$ sections of $V$:
the non-vanishing terms are precisely those given by the Koszul formula in the evaluation of $\d_V^\nabla\bar{\omega}_{p,q}$.
This result is the Lie algebroid analogue of the well-known fact that 
the vertical component of the de Rham differential of a fiber bundle with respect to an Ehresmann connection 
is the de Rham differential of the fibers with coefficients in the representation given by the Bott connection
(see, for instance, \cite[Section 2]{Brahic2011}).

\paragraph{Vanishing of the $\bm{d_{0,1}}$-cohomology.}

Consider the cochain complex $(\Gamma(\wedge^p V^\circ \otimes \wedge^{\bullet}H^\circ),\dv)$ for fixed $p\in\Z$.
In the case when $V$ is isomorphic to the vertical subbundle of a vector bundle, 
we can use the previous results to show that the cohomology of this complex vanishes in positive degrees.

\begin{proposition}\label{vanishprop}
  Let $\A\to E$ be a Lie algebroid over a vector bundle $E\overset{\pi}{\to} M$,
  and $V\subset\A$ an involutive subbundle that is isomorphic as a Lie algebroid to the pullback bundle $p^*E$.
  Then, for any choice of an Ehresmann connection $\gamma$ for $V$ in $\A$, the cohomology of the cochain complex 
  $(\Gamma(\wedge^p V^\circ \otimes \wedge^{\bullet}H^\circ),\dv)$ vanishes in positive degrees, for every $p\in\Z$.
\end{proposition}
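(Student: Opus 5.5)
The plan is to reduce the statement to Corollary \ref{cor:vanish_piE}, using Lemma \ref{lemma:iso_dv} to make the translation.

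Fix $p\in\Z$. By Lemma \ref{lemma:iso_dv}, once the Ehresmann connection $\gamma$ is chosen we have $H^\circ\cong V^*$. Under this identification the complex $(\Gamma(\wedge^p V^\circ\otimes\wedge^\bullet H^\circ),\dv)$ is isomorphic, as a cochain complex, to $(\Omega^\bullet(V;\wedge^pV^\circ),\d_V^\nabla)$. Here $\nabla$ is the Bott representation of $V$ on $\wedge^pV^\circ$. Two checks are needed:
\begin{itemize}
\item the identification $\omega_{p,q}\mapsto\bar\omega_{p,q}$ is a $C^\infty(E)$-linear bijection in each degree $q$;
\item it intertwines the two differentials, which is exactly the content of the lemma.
\end{itemize}
Consequently the cohomology we want is $H^q(V;\wedge^pV^\circ)$.

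Next we transport everything to the pullback bundle. Let $\phi:\pi^*E\to V$ be the given Lie algebroid isomorphism covering $\mathrm{id}_E$. It preserves anchors and brackets, so pullback $\phi^*$ gives a cochain isomorphism
\[
\Omega^\bullet(V;F)\to\Omega^\bullet(\pi^*E;F),\qquad F=\wedge^pV^\circ .
\]
On the target we use the pulled-back representation $\nabla'_X:=\nabla_{\phi(X)}$. This is flat because $\phi$ respects brackets. Hence $H^q(V;F)\cong H^q(\pi^*E;F)$ with respect to $\nabla'$.

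Corollary \ref{cor:vanish_piE} then applies with this arbitrary flat representation $\nabla'$ on $F$, and gives $H^q(\pi^*E;F)=0$ for $q>0$. Explicitly, if $\omega$ is $\dv$-closed, transport it to $\pi^*E$, apply the homotopy operator $h_q$ of Proposition \ref{prop:homotopy}, and transport back. The result is a global primitive, and it is canonical once $\gamma$ is fixed.

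I expect the main obstacle to be confirming that Corollary \ref{cor:vanish_piE} really covers an arbitrary representation $\nabla'$, and not only the canonical one. The homotopy construction needs two ingredients:
\begin{itemize}
\item parallel transport $\mathcal{P}_t$ along the radial $\pi^*E$-path $a(s)=v$, which exists for any $\pi^*E$-connection (it is a linear ODE along $s\mapsto sv$) and is smooth up to $t=0$;
\item the commutation \eqref{eq:commutation}, which uses that $\Phi_t$ is a Lie algebroid morphism covering $\mu_t$ and that $\mathcal{P}_t$ is compatible with $\nabla'$. This relies on flatness: one shows $\mathcal{P}_t$ intertwines $\nabla'$ with $\Phi_t^*\nabla'$.
\end{itemize}
I would check that the proof of Proposition \ref{prop:homotopy} uses no further property of $\nabla$, and in particular no triviality on pullback sections. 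If it does, I would supply the intertwining argument directly. A smaller point to record is that $E$ here is the total space over $M$, which matches the setting of Section \ref{sec4} after renaming.
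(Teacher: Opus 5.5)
Your proposal is correct and is essentially the paper's argument: identify the $\dv$-complex with $(\Omega^\bullet(V;\wedge^pV^\circ),\d_V^\nabla)$ via Lemma~\ref{lemma:iso_dv}, transfer along the isomorphism $V\cong\pi^*E$, and apply Corollary~\ref{cor:vanish_piE}. The concern you flag is already covered by Section~\ref{sec4}, which builds the homotopy operator for an arbitrary representation of $\pi^*E$ on any $F$; as you note, flatness is exactly what underwrites \eqref{eq:commutation}.
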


\begin{proof}
  By Lemma \ref{lemma:iso_dv}, the complex $(\Gamma(\wedge^p V^\circ \otimes \wedge^{\bullet}H^\circ),\dv)$
  is isomorphic to the cochain complex $(\Omega^\bullet(V;\wedge^p V^\circ),\d_V^\nabla)$
  of the Lie algebroid $V$ with coefficients in the Bott-type representation on $\wedge^p V^\circ$.
  In particular, their cohomologies are isomorphic.
  Since, by hypothesis, $V$ is isomorphic to $p^*E$, 
  by Corollary \ref{cor:vanish_piE}, the cohomology of $\pi^*E$ with coefficients in any representation vanishes in positive degrees, 
  and hence, so does the cohomology of $V$ with coefficients in $\wedge^p V^\circ$:
  $H^k(V;\wedge^p V^\circ) = 0$ for all $k > 0$ and $p\in\Z$.
  In particular, the $\dv$-cohomology of $\Gamma(\wedge^p V^\circ \otimes \wedge^{\bullet}H^\circ)$ vanishes in positive degrees.
\end{proof}

\section{The Prolongation of a Lie Algebroid}\label{sec:Prol}

In this section, we recall the prolongation of a Lie algebroid,
and some of its intrinsic geometric structures used in the rest of the paper.
The prolongation provides the natural framework to describe second-order dynamics 
and to construct symplectic structures associated with regular Lagrangians.

The characterization of these symplectic sections, developed in Section \ref{sec:vanish},
is the key to the proof of the main Theorem \ref{maintheorem}.
Here we develop some tools towards this characterization:
Lemma \ref{Propvertical} allows the application of the results of Section \ref{sec4},
leading in Proposition \ref{poincare_lemma} to a ``Poincaré lemma'' for the vertical cohomology of the prolongation.

Further details on the Lie algebroid prolongation can be found in \cite{carinena2007reduction,deLeonEtAl2005,martinez2002lie,Martinez2001}.

For a Lie algebroid $(p: A \rightarrow M, [ \cdot, \cdot ], \rho)$, its \emph{prolongation} over $A$  
is the vector bundle $p_A: \mathcal{T} A \rightarrow A   $ where
\begin{equation}
\mathcal{T} A :=\{ (a,b,v)\in A\times A \times TA \mid p(a)=p(b); \, \rho(b)=T_a p(v)   \}\label{prolalgebroid}
\end{equation}
and  $p_A(a,b,v):=a$.
So, the fiber over $a\in A$ is isomorphic to the vector space
\begin{equation*}
(p_A)^{-1}(a) \simeq \{ (b,v) \in A_{p(a)}\times T_a A \mid  \rho(b)= T_a p(v) \}.
\end{equation*}

Consider also the map $\hat{p}:\mathcal{T} A \rightarrow A$ given by $\hat{p}(a,b,v):=b$.
The \emph{vertical subbundle} of $\mathcal{T} A$ is
\begin{equation}\label{vert-subbundle}
\mathrm{Vert}(\mathcal{T} A ) := \mathrm{ker}\ \hat{p} = \{ (a,b,v )\in \mathcal{T} A \mid b=0, \text{ and } \, T_ap(v)=0 \}.
\end{equation}
Elements of $\mathrm{Vert}(\mathcal{T} A )$ are called \emph{vertical}. Moreover: 

\begin{definition} 
A section $Z\in \Gamma( \mathcal{T} A )$ is called \emph{vertical} if  $Z(a)\in \mathrm{Vert}(\mathcal{T} A ) $ for all $a\in A$.
\end{definition}
Notice that a vertical section $Z$ in   $\mathcal{T} A  $ takes the form
$$ Z = (\mathrm{id}_A, 0 , X),  $$
with $X\in \mathfrak{X}(A)$ a vertical vector field on $A$, i.e., $Tp \circ  X=0.$

A more general type of sections of $\mathcal{T}A$ are the so-called \emph{projectable} sections.

\begin{definition}
  A section $Z\in \Gamma( \mathcal{T} A )$ is said to be \emph{projectable} 
  if there exists $\sigma\in \Gamma(A)$ such that
\begin{equation}
\hat{p} \circ Z = \sigma \circ p.
\end{equation}
\end{definition}
One can deduce that every projectable section $Z$ has the form
$$ Z= (\mathrm{id}_A, \sigma\circ p , X), $$
where $\sigma \in \Gamma(A)$ and $X\in \mathfrak{X}(A)$ are such that $X$ and $\rho(\sigma)$ are $p$-related vector fields.

A local coordinate system on the prolongation $\mathcal{T} A $ can be obtained as follows:
fix an adapted coordinate system $(x^i, y^k)$ on $A$ associated with a local chart $(U,x^1,x^2,\ldots,x^n)$ on $M$ and a basis $\{ e_l \}$ of local sections on $A$. 
We choose a point $(a,b,v)\in \mathcal{T} A$. Since $p(a)=p(b)$ the points $a, b\in A$ have coordinates $(x^i,u^k) $ and $(x^i,y^j) $, respectively. Now, the condition $\rho(b)= T_a p(v) $ implies that $v$ has the form $\displaystyle v=y^j\rho^i_j(x)\left.\frac{\partial}{\partial x^i}\right|_a + v^l\left.\frac{\partial}{\partial y^l}\right|_a$. Thus, one can assign
\begin{equation}
\mathcal{T} A  \ni(a,b,v) \mapsto (x^i,u^k, y^j,  v^l),
\end{equation}
as coordinates. 
It follows from here that $\mathrm{rank}\,\mathcal{T} A  =2\,\mathrm{rank}\, A $ 
and  $\mathrm{rank}\,(\mathrm{Vert}(\mathcal{T} A )) = \mathrm{rank}\, A$.


One can also define a local basis $\{\mathcal{E}_i, \Upsilon_j\}$ of sections of $\mathcal{T}^A A$ 
as follows:
\begin{equation}\label{sectionprolon}
\mathcal{E}_j(a):= \left(a, e_j(p(a)),  \rho^i_j(x)\left.\frac{\partial}{\partial x^i}\right|_a  \right), \  \  \  \text{and}  \  \  \  \Upsilon_k(a):=\left(a, 0,\left.\frac{\partial}{\partial y^k}\right|_a\right)
\end{equation}
Note that these local sections are projectable.
Moreover, the sections $\Upsilon_k$ are vertical and determine a local basis for the subspace of vertical sections.

\paragraph{Lie algebroid structure of  $\mathcal{T}A$.} 
The key property that allows us to define a Lie algebroid structure on $\mathcal{T} A$ is the fact that 
every section of $\mathcal{T} A$ is locally written as a linear combination of projectable sections.
The Lie bracket can be easily defined in terms of projectable sections and extended by $\R$-linearity and the Leibniz rule.

\begin{definition}
Let $Z_i=  (\mathrm{id}_A, \sigma_i \circ p , X_i) $ be projectable sections of $\mathcal{T} A $ with $\sigma_i\in\Gamma(A) $ for $i=1,2.$  
The Lie bracket of $Z_1$ and $Z_2$ is
\begin{equation*}
[Z_1,Z_2](a):=(a,[\sigma_1,\sigma_2]\circ p(a),[X_1,X_2](a)).
\end{equation*}
\end{definition}

The Lie brackets of the elements of the basis \eqref{sectionprolon} are given by
\begin{equation}
[\mathcal{E}_i,\mathcal{E}_j ] = C^k_{ij}\mathcal{E}_k, \qquad  [\mathcal{E}_i, \Upsilon_k]=0, \quad \text{and} \quad  [\Upsilon_j, \Upsilon_k ]=0. \label{bracket-in-basis}
\end{equation}
The anchor map $\hat{\rho} : \mathcal{T}A \rightarrow TA$ is just the projection onto the third factor, $\hat{\rho}(a,b,v) :=v$,
and can also be defined locally in terms of the basis \eqref{sectionprolon} by
\begin{equation}
\hat{\rho} (\mathcal{E}_j) = \rho^i_j\frac{\partial}{\partial x^i}, \qquad   \hat{\rho} (\Upsilon_k)=\frac{\partial}{\partial y^k}. \label{anchorinbasis}
\end{equation}

Given $a,b\in A$ in the same fiber, recall that the \emph{vertical lift} $(a,b)\mapsto b_a^\vee \in T_aA$ is defined on $F\in C^\infty(A)$ by
\begin{equation*}
b_a^\vee F := \left.\frac{d}{dt}\right|_{t=0}( F(a+tb) ). 
\end{equation*}
This gives rise to the isomorphisms $\rho:p^*A\to VA\subset TA$ and $\xi^\vee :  p^*A \to \mathrm{Vert}(\mathcal{T} A)$ given by
\begin{align}
\rho(a,b) = b_a^\vee  \qquad\text{and}\qquad
\xi^\vee(a,b) =  (a,0, b_a^\vee), \label{liftmap}
\end{align}
leading to the commutative diagram
\[
\xymatrix@C=1em@R=1.5em{
p^*A \ar[rr]^{\xi^\vee} \ar[dr]_{\rho} & & \mathrm{Vert}(\mathcal{T}A) \ar[dl]^{\widehat{\rho}} \\
& VA &
}.
\]
In what follows, we refer to $\xi^\vee :  p^*A \to \mathrm{Vert}(\mathcal{T} A)$ as the \emph{vertical lifting map}.

\begin{lemma}\label{Propvertical}
Let $\mathrm{Vert}(\mathcal{T} A ) $ be the vertical subbundle \eqref{vert-subbundle}. Then:
\begin{itemize}
\item[(i)] $\mathrm{Vert}(\mathcal{T} A ) $ is an involutive subbundle,
$$ [\Gamma(\mathrm{Vert}(\mathcal{T} A ) ), \Gamma(\mathrm{Vert}(\mathcal{T} A ) )] \subset \Gamma(\mathrm{Vert}(\mathcal{T} A ) ). $$
In particular, 
$ ( p_A : \mathrm{Vert}(\mathcal{T} A ) \rightarrow A, [ \cdot , \cdot ],  \hat{\rho} : \mathrm{Vert}(\mathcal{T} A ) \rightarrow TA ) $ 
is a Lie subalgebroid of $\mathcal{T}A$.
\item[(ii)] The vertical lifting map is a Lie algebroid isomorphism.
\end{itemize}
\end{lemma}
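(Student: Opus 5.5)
The plan is to work locally with the basis $\{\mathcal{E}_i,\Upsilon_k\}$ of \eqref{sectionprolon} and the bracket relations \eqref{bracket-in-basis}, and then globalize using the intrinsic definitions.

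For (i), note that $\{\Upsilon_k\}$ is a local frame of $\mathrm{Vert}(\mathcal{T}A)$. Any two vertical sections can therefore be written locally as $Z=f^j\Upsilon_j$ and $W=g^k\Upsilon_k$ with $f^j,g^k\in C^\infty(A)$. The Leibniz rule in $\mathcal{T}A$ together with $[\Upsilon_j,\Upsilon_k]=0$ gives
\[
[Z,W]=f^j\,\hat\rho(\Upsilon_j)(g^k)\,\Upsilon_k-g^k\,\hat\rho(\Upsilon_k)(f^j)\,\Upsilon_j
= \Big(f^j\frac{\partial g^k}{\partial y^j}-g^j\frac{\partial f^k}{\partial y^j}\Big)\Upsilon_k ,
\]
which is again vertical.

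There is also a coordinate-free check. A vertical section has the form $(\mathrm{id}_A,0,X)$ with $X$ vertical, so it is projectable with $\sigma=0$. By the definition of the bracket on projectable sections, the bracket of two such sections is $(\mathrm{id}_A,0,[X_1,X_2])$, and $[X_1,X_2]$ is vertical because $VA$ is involutive. Since $\mathrm{Vert}(\mathcal{T}A)=\ker\hat p$ is a subbundle of constant rank $\mathrm{rank}\,A$, restricting the bracket and $\hat\rho$ to it makes it a Lie subalgebroid.

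For (ii), first check that $\xi^\vee$ is a vector bundle isomorphism over $\mathrm{id}_A$. Fiberwise it is linear, since $b\mapsto b^\vee_a$ is linear, and it is injective. Both bundles have rank $\mathrm{rank}\,A$, so it is an isomorphism. Anchor compatibility is exactly the commutative diagram: $\hat\rho\circ\xi^\vee=\rho$.

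For the bracket, it suffices to check on pullback sections $p^*s$, because these generate $\Gamma(p^*A)$ and both brackets obey the Leibniz rule with anchors intertwined by $\xi^\vee$. In coordinates, $p^*e_k\mapsto\Upsilon_k$, since $(e_k)^\vee_a=\partial/\partial y^k|_a$. Then $[p^*e_j,p^*e_k]=0$ and $[\Upsilon_j,\Upsilon_k]=0$ agree. For a general $s=s^ke_k$ with $s^k\in C^\infty(M)$, we have $\xi^\vee(p^*s)=(p^*s^k)\Upsilon_k$. The bracket $[(p^*s^j)\Upsilon_j,(p^*s^k)\Upsilon_k]$ vanishes because $\hat\rho(\Upsilon_j)(p^*s^k)=\partial_{y^j}(s^k\circ p)=0$. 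This matches $[p^*s_1,p^*s_2]=0$.

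The main (mild) obstacle is justifying the reduction to generators: one must confirm that a vector bundle isomorphism commuting with anchors and preserving brackets on a generating family preserves all brackets. This follows by expanding arbitrary sections as $C^\infty(A)$-combinations of generators and applying the Leibniz rule on both sides.
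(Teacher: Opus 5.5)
Your proposal is correct and follows essentially the same route as the paper. For (i) you use the frame $\{\Upsilon_k\}$, the relation $[\Upsilon_j,\Upsilon_k]=0$ and the Leibniz rule; for (ii) you use the anchor diagram and check brackets on the frame $p^*e_k\mapsto\Upsilon_k$. Your coordinate-free check of (i), which treats vertical sections as projectable sections with $\sigma=0$, and your explicit justification of the fiberwise isomorphism and of the reduction to generators, fill in steps the paper leaves implicit.
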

\begin{proof}
The proof of item (i) follows directly from the bracket relations \eqref{bracket-in-basis} and the Leibniz rule. 
For the second part, the commutative diagram is the compatibility of $\xi^\vee$ with the anchor maps.
Thus, it suffices to check the bracket compatibility on a local basis of sections:
\[
\xi^\vee[p^*e_i,p^*e_j] = \xi^\vee(0) = 0 = [\Upsilon_i,\Upsilon_j] = [\xi^\vee(p^*e_i),\xi^\vee(p^*e_j)].
\]
\end{proof}


\subsection{Liouville section, vertical endomorphism and SODEs}

\begin{definition}
The \emph{Liouville (Euler) section} $\Delta\in\Gamma(\mathrm{Vert}(\mathcal{T}A))$ of the vertical subbundle
of the prolongation of the Lie algebroid $(p: A \rightarrow M, [ \cdot, \cdot ], \rho)$ is given by
\begin{equation}\label{Liuosec}
\Delta(a):=\xi^\vee(a,a)=(a,0,a^\vee_a).
\end{equation}
\end{definition}
We notice that $\hat{\rho}(\Delta)=\mathbf{E}$ is the Liouville vector field on $A$.
On the other hand, the \emph{vertical endomorphism} is the vector bundle map $J: \mathcal{T}A\rightarrow \mathcal{T}A $ given by
\begin{equation}\label{endvertical}
J(a,b,v)=(a,0,b^\vee_a).
\end{equation}

The following fact is clear due to the surjectivity of the vertical lifting map:
\begin{lemma}\label{propertiesJ}
The vertical endomorphism \eqref{endvertical} satisfies $\mathrm{Im}\, J  =\ker J = \mathrm{Vert}(\mathcal{T} A )$. Hence,  $J^2 =0$.
\end{lemma}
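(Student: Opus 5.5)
The statement of Lemma \ref{propertiesJ} consists of two set equalities, $\mathrm{Im}\,J=\mathrm{Vert}(\mathcal{T}A)$ and $\ker J=\mathrm{Vert}(\mathcal{T}A)$. The conclusion $J^2=0$ then follows immediately, since $\mathrm{Im}\,J\subset\ker J$. I will prove both equalities fiberwise at a fixed $a\in A$, working directly from the definitions \eqref{vert-subbundle}, \eqref{liftmap} and \eqref{endvertical}, and using only that $\xi^\vee:p^*A\to\mathrm{Vert}(\mathcal{T}A)$ is an isomorphism.

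For the image, the first step is to show $\mathrm{Im}\,J\subset\mathrm{Vert}(\mathcal{T}A)$. Given $(a,b,v)\in\mathcal{T}A$, the element $J(a,b,v)=(a,0,b^\vee_a)$ has middle entry $0$. Moreover $T_ap(b^\vee_a)=0$, because $t\mapsto a+tb$ stays in the fiber $A_{p(a)}$. Hence $J(a,b,v)$ satisfies both conditions in \eqref{vert-subbundle}.

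For the reverse inclusion, take a vertical element $(a,0,v)$. By Lemma \ref{Propvertical}(ii), or directly from the surjectivity of $\xi^\vee$, there is $b\in A_{p(a)}$ with $(a,0,v)=\xi^\vee(a,b)=(a,0,b^\vee_a)$. Next I need an element of $\mathcal{T}A$ whose second component is $b$. Since $T_ap:T_aA\to T_{p(a)}M$ is surjective, I choose $w\in T_aA$ with $T_ap(w)=\rho(b)$. Then $(a,b,w)\in\mathcal{T}A$ and $J(a,b,w)=(a,0,b^\vee_a)=(a,0,v)$, so every vertical element lies in the image.

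For the kernel, note that $J(a,b,v)=0$ if and only if $b^\vee_a=0$. Injectivity of the vertical lift $b\mapsto b^\vee_a$, which is the injectivity of $\xi^\vee$, shows this happens if and only if $b=0$. For an element $(a,0,v)\in\mathcal{T}A$, the defining condition of $\mathcal{T}A$ forces $T_ap(v)=\rho(0)=0$. Therefore $\ker J$ is exactly the set $\{b=0,\ T_ap(v)=0\}=\mathrm{Vert}(\mathcal{T}A)$.

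The only step that needs any care is the surjectivity onto vertical elements, namely producing a lift $w$ of $\rho(b)$. This is handled by the surjectivity of $T_ap$, since $p$ is a submersion. Alternatively, in local coordinates one can take $w=b^j\rho^i_j\,\partial/\partial x^i|_a$, which is exactly the vector appearing in $\mathcal{E}_j$ in \eqref{sectionprolon}.
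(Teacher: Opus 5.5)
Your proof is correct and follows the same idea as the paper, which states the lemma without proof as clear from the surjectivity of the vertical lifting map $\xi^\vee$. You also supply the details the paper leaves implicit: the lift $w$ of $\rho(b)$ via surjectivity of $T_ap$, so that every vertical element is an image, and injectivity of $b\mapsto b^\vee_a$, which identifies the kernel.
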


Let us denote by the same letter $J$ the induced endomorphism on the module of sections $J: \Gamma(\mathcal{T}A) \rightarrow \Gamma(\mathcal{T}A)$,
as well as the negative of its insertion operator $J:\Gamma(\wedge^\bullet (\mathcal{T}A)^*)\to\Gamma(\wedge^\bullet (\mathcal{T}A)^*)$,
which is a derivation of the tensor algebra of sections $\Gamma(\wedge^\bullet (\mathcal{T}A)^* ) $ characterized by
\begin{equation*}
J(L):=0, \  \  \  \ J(\sigma):=-\sigma \circ J,
\end{equation*}
for all $L\in C^\infty(A)$ and $\sigma  \in \Gamma( (\mathcal{T}A)^* )$. 
In particular, for the dual basis $\{\Upsilon^j,\mathcal{E}^i\}$ of $\{\mathcal{E}_i,\Upsilon_j\}$, we have
$$J\mathcal{E}^{i}=0 , \ \ \ \ \ \ J\Upsilon^j=-\mathcal{E}^{j},$$
and if $\omega \in \Gamma(\wedge^2 (\mathcal{T}A)^* )$  then
$$J\omega(Z_1,Z_2)=-\omega(J(Z_1),Z_2)-\omega(Z_1,J(Z_2)) $$
for all $Z_1,Z_2 \in\Gamma( \mathcal{T}A )$.

\begin{definition}
A section $Z\in \Gamma( \mathcal{T}A)$ is called a second-order differential equation (SODE) on the Lie algebroid $A$ if  
$$ J \circ Z = \Delta, $$
where $J$ is the vertical endomorphims \eqref{endvertical} and $\Delta$ is the Liouville section \eqref{Liuosec}.
\end{definition}

There is a biunivocal correspondence between SODEs and semisprays on $A$:
\begin{itemize}
\item[(i)] If $Z\in\Gamma(\mathcal{T}A) $ is a SODE then the vector field $\mathcal{V}=\hat{\rho}(Z)\in\mathfrak{X}(A)$ is a semispray.
\item[(ii)] Reciprocally, if $\mathcal{V}\in\mathfrak{X}(A)$ is a semispray then $Z(a)=(a,a,\mathcal{V}(a))$, $a\in A$, is a SODE.
\end{itemize}

From these obsevations, it is clear that $Z\in \Gamma( \mathcal{T}A) $ is a SODE on $A$ if and only if $Z$ is a section of the form
\begin{equation}
Z(a)=(a,a,\mathcal{V}(a)), 
\end{equation}
Locally, an SODE takes the following form
\begin{equation}\label{semi-coord}
Z(x,y)= y^j \mathcal{E}_j(x,y) + \nu^k(x,y) \Upsilon_k(x,y).
\end{equation}

\subsection{Non-linear connections and bigraded decomposition}

To simplify notation for the remainder of the paper, we denote by $\A:= \mathcal{T} A$
the prolongation of the Lie algebroid $A$.
Recall form Lemma \ref{Propvertical} that the vertical subbundle $\mathrm{Vert}(\A)$ is involutive in the Lie algebroid $\A$.
In this sense, following Section \ref{sec4}, an Ehresmann connection is a vector bundle morphism $\gamma:\A\to\A$ 
such that $\operatorname{Im}\gamma=\mathrm{Vert}(\A)$ and $\gamma^2=\gamma$,
giving rise to a horizontal subbundle $\mathbb{H}$, complementary to the vertical bundle $\mathrm{Vert}(\A)$, on $\A$: 
$\A=\mathrm{Vert}(\A)\oplus\mathbb{H}$.

For a fixed basis  $\{ \mathcal{E}_i, \Upsilon_j \}$ of local sections on $\A$ as in \eqref{sectionprolon},  
with dual basis $\{\Upsilon^j,\mathcal{E}^i\}$, 
an Ehresmann connection is locally given by
\begin{equation}
\gamma=\operatorname{Id}_\A-\mathcal{E}^i\otimes(\mathcal{E}_i-\gamma^j_i\Upsilon_j).
\end{equation}
The vertical and horizontal subbundles and their annihilators are locally generated as
$$ \mathrm{Vert}(\mathscr{A})= \mathrm{span}\{\Upsilon_j \}, \ \ \ \   \mathbb{H}=  \mathrm{span}\{\mathrm{hor}_{i}:=\mathcal{E}_i-\gamma_i^{j}\Upsilon_j \} , $$
$$ \mathrm{Vert}(\mathscr{A})^{0}= \mathrm{span}\{\mathcal{E}^{j} \}, \ \ \ \   \mathbb{H}^{0}=  \mathrm{span}\{\nu^{i}:=\Upsilon^{i}+\gamma_j^{i}\mathcal{E}^{j} \}.$$
The choice of an Ehresmann connection $\gamma$ induces a bigrading of $\Gamma(\wedge^k \mathscr{A}^*)$,
\begin{equation}
\Gamma(\wedge^k \mathscr{A}^*) = \bigoplus_{p+q=k} \Gamma_{pq}(\wedge^k \mathscr{A}^* ),\label{bigrad}
\end{equation}
where $\Gamma_{pq}(\wedge^k \mathscr{A}^*):=\Gamma(\wedge^p\mathrm{Vert}(\mathscr{A})^{0}\otimes\wedge^q\mathbb{H}^0)$ 
denotes the set of sections of bidegree $(p,q)$.

The de Rham differential $\mathrm{d}_\mathscr{A}$ also has the bigraded decomposition
\begin{equation}
\mathrm{d}_\mathscr{A}= \mathrm{d}_\mathscr{A}'+\mathrm{d}_\mathscr{A}''+\partial_\mathscr{A},\label{decom-extd}
\end{equation}
where each component $\mathrm{d}_\mathscr{A}'$, $\mathrm{d}_\mathscr{A}''$, and $\partial_\mathscr{A}$
is of bidegree $(1,0)$, $(0,1)$, and $(2,-1)$, respectively. 
The coboundary condition $\mathrm{d}_\mathscr{A} \circ \mathrm{d}_\mathscr{A} =0 $ is equivalent to the relations
\begin{eqnarray*}
\mathrm{d}_\mathscr{A}''\circ \mathrm{d}_\mathscr{A}'' = 0, \ \ \ \partial_\mathscr{A}\circ \partial_\mathscr{A} =0, \ \ \ \ \mathrm{d}_\mathscr{A}'\circ \mathrm{d}_\mathscr{A}''+\mathrm{d}_\mathscr{A}''\circ\mathrm{d}_\mathscr{A}'=0,\\
\mathrm{d}_\mathscr{A}'\circ \partial_\mathscr{A}+  \partial_\mathscr{A}\circ \mathrm{d}_\mathscr{A}'=0, \ \ \ \ \mathrm{d}_\mathscr{A}'\circ \mathrm{d}_\mathscr{A}'+ \mathrm{d}_\mathscr{A}''\circ\partial_\mathscr{A}  + \partial_\mathscr{A} \circ \mathrm{d}_\mathscr{A}''=0.
\end{eqnarray*}

Observe that the component $\mathrm{d}_\mathscr{A}''$ in the decomposition \eqref{decom-extd} is also a coboundary operator.
Furthermore, as discussed in Section \ref{sec4}, its cohomology is trivial in positive degrees.

\begin{proposition}\label{poincare_lemma}
Let $\vartheta\in \Gamma_{pq}(\wedge^k \mathscr{A}^* )$ be a section of bidegree $(p,q)$, $q\geq1$. 
If $\mathrm{d}_\mathscr{A}''\vartheta=0$, then there exists $\zeta\in \Gamma_{p,q-1}(\wedge^{k-1} \mathscr{A}^*)$ such that
$$ \mathrm{d}_\mathscr{A}'' \zeta = \vartheta. $$
\end{proposition}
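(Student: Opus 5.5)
This follows from Proposition \ref{vanishprop} applied to the prolongation, so the plan is to check its hypotheses and translate the notation. Take the Lie algebroid to be $\A=\mathcal{T}A$ over $E=A$, which is a vector bundle $p:A\to M$. Take $V=\mathrm{Vert}(\A)$ and let $\gamma$ be the fixed Ehresmann connection with horizontal subbundle $H=\mathbb{H}$.

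\textbf{Step 1: the hypotheses of Proposition \ref{vanishprop}.} By Lemma \ref{Propvertical}(i), $\mathrm{Vert}(\A)$ is an involutive subbundle, hence a Lie subalgebroid of $\A$. By Lemma \ref{Propvertical}(ii), the vertical lifting map $\xi^\vee:p^*A\to\mathrm{Vert}(\A)$ is a Lie algebroid isomorphism. Here $p^*A$ carries the canonical pullback structure of Section \ref{sec4}: the anchor is the vertical lift, and pullback sections have zero bracket. This matches exactly, since $\xi^\vee(p^*e_i)=\Upsilon_i$ and $[\Upsilon_i,\Upsilon_j]=0$. So all hypotheses hold.

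\textbf{Step 2: matching notation.} The bigrading (\ref{bigrad}) is the same as the one in Section \ref{sec4}: $\Gamma_{pq}(\wedge^k\A^*)=\Gamma(\wedge^pV^\circ\otimes\wedge^qH^\circ)$. The component $\mathrm{d}_\A''$ of (\ref{decom-extd}) has bidegree $(0,1)$. Both decompositions are the unique splitting of $\d_\A$ into bihomogeneous pieces, so $\mathrm{d}''_\A=\dv$.

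\textbf{Step 3: apply the vanishing result.} For fixed $p$, Proposition \ref{vanishprop} says the complex $(\Gamma(\wedge^pV^\circ\otimes\wedge^\bullet H^\circ),\dv)$ has zero cohomology in degrees $q\geq1$. A $\mathrm{d}''_\A$-closed $\vartheta$ of bidegree $(p,q)$ with $q\ge1$ is therefore exact in this complex. So there is $\zeta\in\Gamma(\wedge^pV^\circ\otimes\wedge^{q-1}H^\circ)=\Gamma_{p,q-1}(\wedge^{k-1}\A^*)$ with $\mathrm{d}''_\A\zeta=\vartheta$.

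\textbf{Explicit primitive.} Under Lemma \ref{lemma:iso_dv}, $\bar\vartheta\in\Omega^q(V;\wedge^pV^\circ)$ is $\d^\nabla_V$-closed. Pull it back to $p^*A$ via $\xi^\vee$, apply the homotopy operator $h_q$ of Proposition \ref{prop:homotopy}, push forward again, and extend by zero on $\mathbb{H}$ using $\gamma^*$. Note that $\mathrm{d}''_\A\zeta$ has bidegree $(p,q)$ automatically, so no bidegree bookkeeping is lost.

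\textbf{Main obstacle.} The step needing most care is Lemma \ref{lemma:iso_dv}: the identification $\wedge^pV^\circ\otimes\wedge^qH^\circ\cong\wedge^qV^*\otimes\wedge^pV^\circ$ must intertwine $\dv$ with $\d_V^\nabla$, where $\nabla$ is the Bott representation on $\wedge^pV^\circ$. The check is on $\mathrm{d}_\A\vartheta$ evaluated on $p$ horizontal and $q+1$ vertical sections, using $\mathcal{L}_u$ for $u\in\Gamma(V)$. Terms involving brackets $[u,h]$ with $h$ horizontal combine exactly into the Bott derivative $\nabla_u$. The terms $[u,u']$ stay vertical by involutivity. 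The paper states this lemma earlier, so it is assumed here; if it needed verifying, I would do it in the local frame $\{\mathrm{hor}_i,\Upsilon_j\}$ using (\ref{bracket-in-basis}).
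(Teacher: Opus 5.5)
Your proposal is correct and follows the paper's proof. Like the paper, you use Lemma \ref{Propvertical} to see that $\mathrm{Vert}(\A)$ is an involutive subbundle isomorphic as a Lie algebroid to $p^*A$, identify $\mathrm{d}''_\A$ with $\dv$, and invoke Proposition \ref{vanishprop}; your explicit primitive via the homotopy operator $h_q$ simply unwinds that same argument.
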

\begin{proof}
For the Lie algebroid $\A\to A$, the vertical subbundle $\mathrm{Vert}(\A)$ involutive
and isomorphic to the pullback bundle $p^*A$, due to Lemma \ref{Propvertical}.
So, by Proposition \ref{vanishprop}, the cohomology of 
$$(\Gamma(\wedge^p\mathrm{Vert}(\mathscr{A})^{0}\otimes\wedge^\bullet\mathbb{H}^0),\mathrm{d}_\mathscr{A}'')$$ 
vanishes in degree $q$, meaning that every $\vartheta\in \Gamma_{pq}(\wedge^k \mathscr{A}^*)$ with $\mathrm{d}_\mathscr{A}''\vartheta=0$
is of the form $\vartheta = \mathrm{d}_\mathscr{A}'' \zeta$ for some $\zeta\in \Gamma_{p,q-1}(\wedge^{k-1} \mathscr{A}^*)$.
\end{proof}


\section{Symplectic Sections on the Prolongation}\label{sec:symp}

In this section, we recall the notion of \emph{symplectic Lie algebroid} \cite{deLeonEtAl2005}, 
that is, a Lie algebroid equipped with a covariant 2-section with properties analogous to those of symplectic structures.
In particular, we recall their construction on the prolongation as the \emph{Cartan 2-section} of a regular Lagrangian.

\subsection{Symplectic Lie Algebroids}

Given a Lie algebroid $(p: A\to M, [\cdot ,\cdot]_A, \rho)$, with de Rham differential $\mathrm{d}_A$,
a section $\omega \in \Gamma(\wedge^k A^* )$ is said to be \emph{closed} if $\mathrm{d}_A\omega =0 $.
If $\omega \in \Gamma(\wedge^2 A^*)$, then we have the vector bundle morphism $\omega^\flat: A \rightarrow A^*$ 
given by the insertion: $a \mapsto \mathrm{i}_a\omega$. 
We say that $\omega$ is \emph{non-degenerate} if $\omega^\flat$ is an isomorphism of vector bundles.

\begin{definition}
A \emph{symplectic structure} on the Lie algebroid $A \rightarrow M$ is a 
closed and non-degenerate section $\omega \in \Gamma(\wedge^2 A^* )$.  
In this case, the pair $(A, \omega)$ is called a \emph{symplectic Lie algebroid}.
\end{definition}


On a symplectic algebroid $(A, \omega)$, for each function $f\in C^\infty(M)$ there exists a unique $\sigma_f \in \Gamma( A ) $
called the \emph{Hamiltonian section} of $f$, such that
\begin{equation}
\mathrm{i}_{\sigma_f} \omega = -\mathrm{d}_A f.
\end{equation}
Furthermore, as is well known, 
every symplectic Lie algebroid gives rise to a Poisson structure on the base manifold $M$.
This can be seen as an instance of Proposition 3.6 and Theorem 4.1 in \cite{MackenzieXu1994} 
in the context of triangular Lie bialgebroids.
For completeness, we present the following proof.

\begin{proposition}\label{poisson-base}
Let $(A, \omega)$ be a symplectic algebroid. The symplectic structure $\omega$ defines a Poisson bracket on $M$ given by
\begin{equation}
\{f, g  \}_\omega := \omega(\sigma_f,\sigma_g), \qquad f,g\in C^\infty(M). \label{pois-brac}
\end{equation}
Furthermore, the anchor map relates the Hamiltonian section $\sigma_f$ and vector field $X_f$ as
\begin{equation*}
 \rho(\sigma_f)= X_f
\end{equation*}
\end{proposition}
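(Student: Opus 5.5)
The plan is to define the bracket through the Hamiltonian sections and then check the Poisson axioms one at a time, each time using the algebraic properties of $\omega$ and of the differential $\mathrm{d}_A$. Note that $\sigma_f$ is defined for $f\in C^\infty(M)$, so everything is about functions on the base.

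\emph{Basic properties.} Nondegeneracy of $\omega$ gives the uniqueness of $\sigma_f$. Because $\mathrm{d}_A$ is $\mathbb{R}$-linear, the assignment $f\mapsto\sigma_f$ is $\mathbb{R}$-linear. Since $\mathrm{d}_A(fg)=f\,\mathrm{d}_Ag+g\,\mathrm{d}_Af$, nondegeneracy also gives $\sigma_{fg}=f\sigma_g+g\sigma_f$. It follows that $\{\cdot,\cdot\}_\omega$ is bilinear, antisymmetric because $\omega$ is, and satisfies the Leibniz rule.

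\emph{Bracket as a derivation.} I would rewrite the bracket using the relation between $\mathrm{d}_A$ and the anchor on functions, $\mathrm{d}_Ag(\sigma)=\mathcal{L}_{\rho(\sigma)}g$:
\[
\{f,g\}_\omega=\omega(\sigma_f,\sigma_g)=-(\mathrm{i}_{\sigma_g}\omega)(\sigma_f)=\mathrm{d}_Ag(\sigma_f)=\mathcal{L}_{\rho(\sigma_f)}g .
\]
Hence the vector field $X_f$ defined by $X_f(g)=\{f,g\}$ is exactly $\rho(\sigma_f)$, which proves the second assertion. The sign convention for $X_f$ has to match the one used in the paper; if it does not, one adjusts a sign accordingly.

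\emph{Jacobi identity.} This is the main step. First I would show $\sigma_{\{f,g\}}=[\sigma_f,\sigma_g]$. Each $\sigma_f$ satisfies $\mathcal{L}_{\sigma_f}\omega=\mathrm{i}_{\sigma_f}\mathrm{d}_A\omega+\mathrm{d}_A\mathrm{i}_{\sigma_f}\omega=0-\mathrm{d}_A\mathrm{d}_Af=0$, using closedness of $\omega$. The Cartan identity $\mathrm{i}_{[X,Y]}=[\mathcal{L}_X,\mathrm{i}_Y]$ then gives
\[
\mathrm{i}_{[\sigma_f,\sigma_g]}\omega=\mathcal{L}_{\sigma_f}\mathrm{i}_{\sigma_g}\omega=-\mathcal{L}_{\sigma_f}\mathrm{d}_Ag=-\mathrm{d}_A\mathrm{i}_{\sigma_f}\mathrm{d}_Ag=-\mathrm{d}_A\{f,g\}_\omega .
\]
By nondegeneracy this yields $\sigma_{\{f,g\}}=[\sigma_f,\sigma_g]$. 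Combining it with $\{f,h\}=\rho(\sigma_f)h$, the Jacobiator becomes
\[
\rho(\sigma_f)\rho(\sigma_g)h-\rho(\sigma_g)\rho(\sigma_f)h-\rho([\sigma_f,\sigma_g])h ,
\]
which vanishes because $\rho$ is a Lie algebra homomorphism. Equivalently, one can expand $\mathrm{d}_A\omega(\sigma_f,\sigma_g,\sigma_h)=0$ with the Koszul formula.

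The only delicate point is keeping track of signs. The Cartan calculus identities for a general Lie algebroid, namely $\mathcal{L}=\mathrm{i}\,\mathrm{d}_A+\mathrm{d}_A\mathrm{i}$ and $\mathrm{i}_{[X,Y]}=[\mathcal{L}_X,\mathrm{i}_Y]$, hold by the standard derivation-on-the-Koszul-complex argument, so I take them as given.
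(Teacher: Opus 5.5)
Your proof is correct, but it takes a different route from the paper's. The paper does not check the Jacobi identity by hand. It observes that the bracket is the bivector $\pi$ that is $\rho$-related to the algebroid Poisson bivector $\pi_\omega\in\Gamma(\wedge^2A)$ obtained by inverting $\omega$, namely $\pi^\sharp=\rho\circ\pi_\omega^\sharp\circ\rho^*$. It then uses that $\wedge^\bullet\rho$ intertwines the Schouten brackets of $A$ and $TM$, so that $[\pi,\pi]=\wedge^3\rho\,[\pi_\omega,\pi_\omega]=0$; this is an instance of the Mackenzie--Xu theory of triangular Lie bialgebroids. The identity $\rho(\sigma_f)=X_f$ is obtained there by the same one-line computation you give.

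Your argument instead stays within the Cartan calculus on $\Gamma(\wedge^\bullet A^*)$. Closedness gives $\mathcal{L}_{\sigma_f}\omega=0$, hence $\sigma_{\{f,g\}_\omega}=[\sigma_f,\sigma_g]$. The Jacobiator then reduces to $[\rho(\sigma_f),\rho(\sigma_g)]h-\rho([\sigma_f,\sigma_g])h=0$.

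\emph{What your version buys.} It is more elementary and self-contained: it uses only the Cartan identities and the fact that $\rho$ preserves brackets, both already available in the paper. It also yields, as a by-product, that $f\mapsto\sigma_f$ is a Lie algebra morphism.

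\emph{What the paper's version buys.} It is shorter and more conceptual, identifying the base Poisson tensor as the anchor push-forward of the algebroid Poisson tensor. However, it relies on two facts it does not prove. The first is that $\mathrm{d}_A\omega=0$ implies $[\pi_\omega,\pi_\omega]=0$; its standard proof is essentially your Hamiltonian-section computation. The second is that $\wedge^\bullet\rho$ is a morphism of Schouten brackets.
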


\begin{proof}
  It is clear that \eqref{pois-brac} defines a bivector field $\pi$ on $M$.
  Furthermore, if $\pi_\omega\in\Gamma(\wedge^2A)$ is the Poisson structure on $A$ induced by $\omega$,
  then \eqref{pois-brac} implies that $\pi_\omega$ and $\pi$ are $\rho$-related: $\pi^\sharp = \rho\circ\pi_\omega^\sharp\circ\rho^{*}$.
  Since the anchor map $\rho:\Gamma(A)\to \mathfrak{X}(M)$ is a Lie algebra homomorphism, 
  its extension to multisections $\wedge^{\bullet}\rho:\Gamma(\wedge^{\bullet}A)^*\to\mathfrak{X}^\bullet(M)$ 
  is a graded Lie algebra homomorphism when equipped with their respective Schouten--Nijenhuis brackets.
  The $\rho$-relatedness of $\pi$ and $\pi_\omega$ then implies that 
  \[
    [\pi,\pi] = [\wedge^2\rho\pi_\omega,\wedge^2\rho\pi_\omega]= \wedge^3\rho[\pi_\omega,\pi_\omega]=0.
  \]
  For the second part, note for every $g\in C^\infty(M)$ that
  \[
    \mathcal{L}_{\rho(\sigma_{f})}g = \langle \mathrm{d}_A g ,  \sigma_{f} \rangle = \omega ( \sigma_{f},\sigma_{g} ) = \{f, g  \}_\omega = \mathcal{L}_{X_f}g. 
  \]
\end{proof}

\begin{example}
For the tangent Lie algebroid $(\tau_M: TM \rightarrow M) $, a symplectic section $\omega \in \Gamma(\wedge^2 T^*M) $ 
is precisely a symplectic 2-form on $M$ and the symplectic algebroid $(TM, \omega)$ turns out to be a symplectic manifold $(M, \omega)$. 
\end{example}

\subsection{The Cartan 2-Section of a Regular Lagrangian}

In this part, we describe how one can construct a symplectic section on the prolongation $\A:=\mathcal{T}A$ of a Lie algebroid $(p: A \rightarrow M, [ \cdot, \cdot ], \rho)$ 
by means of a regular Lagrangian $L\in C^\infty(A)$. 
To this end, we first introduce the Poincaré--Cartan 1-section.

\begin{definition}
Given a function $ L \in C^\infty(A)$, the \emph{Cartan 1-section} $\theta_L\in\Gamma(\A^*)$ is defined by
\begin{equation*}
\theta_L := -J(\mathrm{d}_{\A}L).
\end{equation*}
\end{definition}

The Cartan 1-section is horizontal, that is, it vanishes on $\mathrm{Vert}( \A )$. 
Locally, it takes the form
\begin{equation*}
\theta_L = \frac{\partial L}{ \partial y^k} \mathcal{E}^k.
\end{equation*}

\begin{definition}
The Cartan 2-section is the differential of $\theta_L$:
\begin{equation}\label{cartantwo}
\omega_L := \mathrm{d}_{\A}\theta_L
\end{equation}
\end{definition}

The local expression of the Cartan 2-section is
\begin{equation}\label{Cartan-2fomcoord}
  \omega_L = M_{ij}\Upsilon^i\wedge \mathcal{E}^j + \frac{1}{2}N_{kl}\mathcal{E}^k\wedge \mathcal{E}^l,
\end{equation}
where
\[
  M_{ij} := \frac{\partial^2 L}{\partial y^i\partial y^j}
  \quad\text{and}\quad
  N_{kl} :=  \rho^r_k\frac{\partial^2 L}{\partial x^r\partial y^l}-\rho^r_l\frac{\partial^2 L}{\partial x^r\partial y^k}-\frac{\partial L}{\partial y^r}C^r_{kl}.
\]
From here, it is clear that $\omega_L$ vanishes on the vertical subbundle, i.e.,   
$$\left. \omega_L \right|_{\mathrm{Vert}(\A) \times \mathrm{Vert}(\A)}=0.$$
Moreover, it follows form \eqref{Cartan-2fomcoord} that $ L \in C^\infty(A)$ is a regular Lagrangian if and only if
the Cartan 2-section $\omega_L $ \eqref{cartantwo} is a symplectic section.
In this case, since $\operatorname{rank}\A=2\operatorname{rank} \mathrm{Vert}(\A)$, 
we have that $\omega_L^{\flat}:\mathrm{Vert}(\A)\to \mathrm{Vert}(\A)^0$ is an isomorphism.
Furthermore, the Hamiltonian section of its energy function
\begin{equation}\label{energyfunc}
E_L := \mathcal{L}_{\mathbf{E}} L - L = \frac{\partial L}{\partial y^k}y^k-L
\end{equation}
is a second-order differential equation.

\begin{proposition}\label{prop:sigmaSODE}
Let $ L \in C^\infty(A)$ be a regular Lagrangian with Cartan 2-section $\omega_L$. 
Then, the Hamiltonian section $\sigma_{E_L}$ of the energy function $E_L$,
\begin{equation}\label{simpleceqn}
\mathrm{i}_ {\sigma_{E_L}}\omega_L =-\mathrm{d}_{\A}E_L,
\end{equation}
is a SODE on $A$.
\end{proposition}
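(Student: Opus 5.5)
We must show that $\sigma_{E_L}$ satisfies $J\sigma_{E_L}=\Delta$. The plan is to exploit how $\omega_L$ interacts with $J$, namely the identity $\mathrm{i}_{JZ}\omega_L = -\mathrm{i}_Z(\cdots)$ restricted to vertical arguments. Concretely, we first establish the intrinsic identity
\[
  \omega_L(JZ_1, Z_2) = -\omega_L(Z_1, JZ_2), \qquad Z_1,Z_2\in\Gamma(\A),
\]
which follows from the local expression \eqref{Cartan-2fomcoord}. Indeed, $J\mathcal{E}_i=\Upsilon_i$ and $J\Upsilon_i=0$, so both sides reduce to the symmetric term $M_{ij}$: for $Z_1=\mathcal{E}_i$, $Z_2=\mathcal{E}_j$ we get $\omega_L(\Upsilon_i,\mathcal{E}_j)=M_{ij}$ and $-\omega_L(\mathcal{E}_i,\Upsilon_j)=M_{ji}$. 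All other basis pairs give $0$, because $\omega_L$ vanishes on $\mathrm{Vert}(\A)\times\mathrm{Vert}(\A)$.

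Next, we evaluate $\omega_L(J\sigma_{E_L}-\Delta, Y)$ for an arbitrary vertical section $Y=JZ$; every vertical section is of this form by Lemma \ref{propertiesJ}. By the identity, and since $\Delta$ and $JZ$ are both vertical,
\[
  \omega_L(J\sigma_{E_L}, JZ) = -\omega_L(\sigma_{E_L}, J^2Z) = 0 = \omega_L(\Delta, JZ).
\]
This is not enough to conclude, so we use the other pairing instead. Pair with a general $Z$:
\[
  \omega_L(J\sigma_{E_L}, Z) = -\omega_L(\sigma_{E_L}, JZ) = \mathrm{d}_\A E_L(JZ) = \hat\rho(JZ)E_L .
\]
In coordinates, for $Z=\mathcal{E}_j$ this equals $\partial E_L/\partial y^j = M_{jk}y^k$, and for $Z=\Upsilon_j$ it equals $0$.

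On the other side, $\Delta = y^k\Upsilon_k$, so $\omega_L(\Delta,\mathcal{E}_j)=y^kM_{kj}$ and $\omega_L(\Delta,\Upsilon_j)=0$. Hence $\mathrm{i}_{J\sigma_{E_L}-\Delta}\,\omega_L=0$, and non-degeneracy of $\omega_L$, which holds by regularity of $L$, gives $J\sigma_{E_L}=\Delta$.

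An intrinsic form of the computation $\hat\rho(JZ)E_L=\omega_L(\Delta,Z)$ can be obtained from $E_L=\mathcal{L}_{\mathbf E}L-L$ and $\theta_L=-J\mathrm{d}_\A L$. It amounts to $\mathrm{i}_\Delta\omega_L = -J(\mathrm{d}_\A E_L)$, that is, $\mathrm{i}_\Delta \mathrm{d}_\A\theta_L = \mathcal{L}_\Delta\theta_L - \mathrm{d}_\A(\mathrm{i}_\Delta\theta_L)$ with $\mathrm{i}_\Delta\theta_L=0$. The main obstacle is verifying this identity cleanly; I would simply check it on the local basis, where $\partial_{y^j}E_L = y^kM_{kj}$ makes it immediate.
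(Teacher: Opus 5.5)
Your proof is correct and is essentially the paper's argument. Writing $\sigma_{E_L}=f^a\mathcal{E}_a+g^b\Upsilon_b$, your equation $\omega_L(J\sigma_{E_L}-\Delta,\mathcal{E}_j)=0$ is exactly the paper's comparison of $\Upsilon^j$-components, $M_{ja}(y^a-f^a)=0$, and the identity $\omega_L(JZ_1,Z_2)=-\omega_L(Z_1,JZ_2)$ (equivalently $J(\omega_L)=0$) just repackages that computation; regularity of $L$, i.e.\ invertibility of $M_{ij}$, finishes both versions, while your discarded vertical pairing and the closing intrinsic remark are harmless but not needed.
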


\begin{proof}
We shall prove that $\sigma_{E_L}$ has locally the form \eqref{semi-coord}. 
Let $\sigma_{E_L} = f^a\mathcal{E}_a+g^b\Upsilon_b$. By \eqref{Cartan-2fomcoord}, 
\begin{equation*}
\mathrm{i}_{\sigma_{E_L}} \omega_L = -  M_{ia}f^a \Upsilon^i+(M_{bj}g^b+f^aN_{aj})\mathcal{E}^j.
\end{equation*}
Since 
\begin{equation}\label{eq:dAEL}
\mathrm{d}_{\A}E_L =  M_{ia} y^a \Upsilon^i + \rho^k_j\frac{\partial E_L}{\partial x^k}\mathcal{E}^j,
\end{equation}
we have from \eqref{simpleceqn} that
\begin{equation}
 M_{ia}( y^a-f^a)=0, \quad\text{for all $i$.}  \label{eqsymp1}
\end{equation}
The regularity of $L$ implies that the unique solution of \eqref{eqsymp1} is $f^a=y^a$, so $\sigma_{E_L}$ is a SODE.
\end{proof}

\begin{corollary}\label{cor1}
If $L \in C^\infty(A)$ is a regular Lagrangian,
then the Poisson structure $\{\cdot,\cdot\}_{ \omega_L }$ on $A$ associated with the Cartan 2-section $\omega_L$
has as Hamiltonian vector field the semispray $\mathcal{V}_L:=\hat{\rho}(\sigma_{E_L})$.
\end{corollary}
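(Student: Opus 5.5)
This is a direct combination of Proposition \ref{poisson-base} and Proposition \ref{prop:sigmaSODE}, applied to the symplectic Lie algebroid $(\A,\omega_L)$ with $\A=\mathcal{T}A$ and base manifold $A$.

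First, since $L$ is regular, the Cartan 2-section $\omega_L=\d_\A\theta_L$ is closed, because it is exact. By the local expression \eqref{Cartan-2fomcoord} it is also non-degenerate. Hence $(\A,\omega_L)$ is a symplectic Lie algebroid over $A$. Proposition \ref{poisson-base} then gives a Poisson bracket $\{F,G\}_{\omega_L}=\omega_L(\sigma_F,\sigma_G)$ on $C^\infty(A)$. The same proposition says that for every $F\in C^\infty(A)$ the Hamiltonian vector field of $F$ with respect to this bracket is $X_F=\hat\rho(\sigma_F)$, where $\sigma_F$ is the Hamiltonian section defined by $\mathrm{i}_{\sigma_F}\omega_L=-\d_\A F$.

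Next, take $F=E_L$. By Proposition \ref{prop:sigmaSODE}, $\sigma_{E_L}$ is a SODE, so it satisfies $J\circ\sigma_{E_L}=\Delta$; equivalently, $\sigma_{E_L}(a)=(a,a,\mathcal{V}_L(a))$. By the correspondence (i) between SODEs and semisprays, $\mathcal{V}_L=\hat\rho(\sigma_{E_L})$ is a semispray. Concretely, the definition of $\A$ forces $T_ap(\mathcal{V}_L(a))=\rho(a)$, which is exactly \eqref{defsemi}. Combining this with $X_{E_L}=\hat\rho(\sigma_{E_L})$ shows that the Hamiltonian vector field of $E_L$ for $\{\cdot,\cdot\}_{\omega_L}$ is the semispray $\mathcal{V}_L$.

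There is essentially no obstacle. The one point to check is that Proposition \ref{poisson-base} is applied with base manifold $A$, so that $C^\infty$ of the base is $C^\infty(A)$ and $E_L$ is an admissible function. The identity $\rho(\sigma_f)=X_f$ then becomes $\hat\rho(\sigma_{E_L})=X_{E_L}$. A sign convention for $X_F$ is implicit in the proof of Proposition \ref{poisson-base}, namely $X_F(g)=\{F,g\}$, and it has to match the convention used here.
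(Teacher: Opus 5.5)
Your proposal is correct and is the argument the paper intends; the paper gives no separate proof. The Cartan 2-section $\omega_L=\mathrm{d}_{\mathscr{A}}\theta_L$ is exact and, by regularity, non-degenerate, so Proposition \ref{poisson-base} applied to $(\mathcal{T}A,\omega_L)$ over the base $A$ gives $X_{E_L}=\hat{\rho}(\sigma_{E_L})$, and Proposition \ref{prop:sigmaSODE} with the SODE--semispray correspondence makes it a semispray; this is the same reasoning used in Section \ref{sec:proof} in the special case $\Theta=0$, $f=0$, where $Z=0$.
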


This corollary is a particular instance of Theorem \ref{maintheorem}. 
Actually, in our proof of Theorem  \ref{maintheorem}, we construct a family of Poisson structures on $A$
that contains that of Corollary \ref{cor1}.

Further computations from the proof of Proposition \ref{prop:sigmaSODE} 
show that the local expressions of the Hamiltonian section $\sigma_{E_L}$ and the semispray $\mathcal{V}_L$ are
\begin{align*}
\sigma_{E_L} = y^\alpha \mathcal{E}_\alpha - M^{rl}\left(\rho^b_l\frac{\partial E_L}{\partial x^b} + N_{sl}y^s \right)\Upsilon_r,
\qquad
\mathcal{V}_L = y^\alpha\rho^\beta_\alpha\frac{\partial}{\partial x^\beta} - M^{rl}\left(\rho^b_l\frac{\partial E_L}{\partial x^b} + N_{sl}y^s \right)\frac{\partial}{\partial y^r}.
\end{align*}

These formulae allow us to give positive answers to the question of existence of Hamiltonian sprays on $A$:

\begin{example}\label{ex:Spray}
  Let $g\in\Gamma(A^*\otimes A^*)$ be a pseudo-Riemannian metric on the Lie algebroid $(A,\rho,[\dcd])$
  and consider the regular (quadratic) Lagrangian $L\in C^\infty(A)$ of Example \ref{ex:RiemannLagrangian}: $L(a) := g(a,a)=\tfrac{1}{2}g_{rs}(x)y^r y^s$.
  Notice that the Hamiltonian semispray $\mathcal{V}_L$ in Corollary \ref{cor1} is actually a spray, since
  \[
    M^{sk} = g^{sk} 
    \quad\text{and}\quad
    N_{jk} = \left(\rho^r_j\frac{\partial g_{sk}}{\partial x^r}-\rho^r_k\frac{\partial g_{sj}}{\partial x^r}-g_{is}C^i_{jk}\right)y^s,
  \]
  where $g^{rs}$ is the inverse of the metric $g_{rs}$ on $A$.
  From here, it is clear that the coefficients of $\frac{\partial}{\partial y^s}$ in $\mathcal{V}_L$ are fiberwise quadratic,
  implying that $\mathcal{V}_L$ is an spray on $A$.
  We also observe that $N_{jk}$ are the coefficients of $d_A(g^{\flat}\mathbf{E})$,
  where $\mathbf{E}$ is the Euler vector field.
  In terms of the Christoffel symbols $\Gamma^{\mu}_{\nu\lambda}$ of the Levi-Civita $A$-connection of $g$, 
  the Hamiltonian spray $\mathcal{V}_L$ is
  \[
    \mathcal{V}_L = y^\alpha\rho^\beta_\alpha\frac{\partial}{\partial x^\beta} + 
    g^{sk}\left(\left( -\Gamma_{ab}^{\mu} g_{\mu k} + \tfrac{1}{2} \Gamma_{kb}^{\mu} g_{\mu a} - \tfrac{1}{2} \Gamma_{ka}^{\mu} g_{\mu b} \right) y^a y^b\right)\frac{\partial}{\partial y^s}.
  \]
\end{example}

\section{Symplectic Sections Vanishing on the Vertical Subbundle}\label{sec:vanish}

In this section, we study the class of symplectic sections on the prolongation of a Lie algebroid that vanish on the vertical subbundle. 
Concretely, we present two key results. 
In Proposition \ref{prop1}, we characterize those symplectic sections vanishing whenever both arguments are vertical. 
Then, in Proposition \ref{prop2}, we describe the subfamily annihilated by the dual extension of the vertical endomorphism.
This class of symplectic sections induce Poisson brackets on the Lie algebroid $A$
having the properties stated in Theorem \ref{maintheorem}, namely,
admitting a Hamiltonian semispray on $A$.


Recall that a section $Z\in \Gamma(\mathscr{A})$ is called vertical if $Z(a)\in \mathrm{Vert}(\mathscr{A})$ for all $a\in A$. 
A section $\omega \in \Gamma(\wedge^k \mathscr{A}^*)$ is called \emph{horizontal} if $\mathrm{i}_Z\omega =0$ for every vertical section $Z$. 

\begin{definition}
Let $\Omega \in \Gamma(\wedge^2 \mathscr{A}^*) $ be a symplectic section. We say that $\Omega$ vanishes on the vertical subbundle if  $\left. \Omega \right|_{\mathrm{Vert}(\mathscr{A}) \times \mathrm{Vert}(\mathscr{A})}=0.$
\end{definition}

Now, we characterize the symplectic sections vanishing on the vertical subbundle.

\begin{proposition}\label{prop1}
If a symplectic section $\Omega \in \Gamma(\wedge^2 \mathscr{A}^* ) $ vanishes on the vertical subbundle, then
\begin{equation}
\Omega = \Theta + \mathrm{ d}_{\mathscr{A}} \zeta, 
\end{equation}
where $\zeta\in\Gamma(\mathscr{A}^*)$ and $\Theta\in\Gamma(\wedge^2 \mathscr{A}^*)$ are horizontal, $\Theta$ is $\mathrm{d}_{\mathscr{A}}$-closed and $\mathrm{rank}(\mathrm{d}_{\mathscr{A}} \zeta)=2r$.
\end{proposition}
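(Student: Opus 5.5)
Fix an Ehresmann connection $\gamma$ for $\mathrm{Vert}(\A)$ in $\A$, with horizontal subbundle $\mathbb{H}$. I will use the bigrading \eqref{bigrad} and the decomposition \eqref{decom-extd}. Since $\Omega$ vanishes on $\mathrm{Vert}(\A)\times\mathrm{Vert}(\A)$, its $(0,2)$ component (the part in $\wedge^2\mathbb{H}^0$) is zero. Hence $\Omega=\Omega_{2,0}+\Omega_{1,1}$. Here $\Omega_{2,0}\in\Gamma(\wedge^2\mathrm{Vert}(\A)^0)$ is horizontal, and $\Omega_{1,1}$ pairs vertical with horizontal directions. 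The idea is to write $\Omega_{1,1}$ as the $(1,1)$-part of $\mathrm{d}_\A\zeta$ for a horizontal $1$-section $\zeta\in\Gamma_{1,0}$. Then I will check that the remainder is a closed horizontal $2$-section.

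\textbf{Step 1.} Decompose $\mathrm{d}_\A\Omega=0$ by bidegree. The $(1,2)$ component reads $\mathrm{d}''_\A\Omega_{1,1}=0$; the only other possible contributor would be $\mathrm{d}'_\A\Omega_{0,2}$, which vanishes since $\Omega_{0,2}=0$. By Proposition \ref{poincare_lemma}, with $p=1$, $q=1$, there is $\zeta\in\Gamma_{1,0}(\A^*)=\Gamma(\mathrm{Vert}(\A)^0)$, i.e.\ a horizontal $1$-section, with $\mathrm{d}''_\A\zeta=\Omega_{1,1}$.

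\textbf{Step 2.} Compute $\mathrm{d}_\A\zeta=\mathrm{d}'_\A\zeta+\mathrm{d}''_\A\zeta+\partial_\A\zeta$. The term $\partial_\A\zeta$ has bidegree $(3,-1)$ and so vanishes. Therefore $\mathrm{d}_\A\zeta=\mathrm{d}'_\A\zeta+\Omega_{1,1}$. Set
\[
\Theta:=\Omega-\mathrm{d}_\A\zeta=\Omega_{2,0}-\mathrm{d}'_\A\zeta .
\]
Then $\Theta\in\Gamma_{2,0}$ is horizontal, and $\mathrm{d}_\A\Theta=\mathrm{d}_\A\Omega-\mathrm{d}_\A^2\zeta=0$.

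\textbf{Step 3 (rank).} Show that $\mathrm{d}_\A\zeta$ has rank $2r$, $r=\operatorname{rank}A$. Because $\Omega$ is nondegenerate and its $(0,2)$ part vanishes, $\Omega^\flat$ maps $\mathrm{Vert}(\A)$ into $\mathrm{Vert}(\A)^0$. It does so injectively, and both bundles have rank $r$, so this map is an isomorphism. Only $\Omega_{1,1}$ contributes to this map. Hence the pairing $\Omega_{1,1}\colon\mathrm{Vert}(\A)\times\mathbb{H}\to\R$ is nondegenerate. Now $\mathrm{d}_\A\zeta$ has zero $(0,2)$ part and the same $(1,1)$ part. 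In a basis adapted to $\mathbb{H}\oplus\mathrm{Vert}(\A)$ its matrix is block form $\begin{pmatrix} * & B\\ -B^T & 0\end{pmatrix}$ with $B$ invertible. It is therefore nondegenerate of rank $2r$.

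\textbf{Main obstacle.} I expect the main obstacle to be the bookkeeping of bidegrees. One must confirm that the $(1,2)$ component of $\mathrm{d}_\A\Omega$ is exactly $\mathrm{d}''_\A\Omega_{1,1}$, with the Ehresmann-dependent $\partial_\A$ terms not interfering. The rest reduces to Proposition \ref{poincare_lemma}, which is where the global vanishing of the $\mathrm{d}''_\A$-cohomology is essential.
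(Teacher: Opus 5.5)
Your proof is correct and follows the paper's argument. As there, you fix an Ehresmann connection, use $\mathrm{d}''_{\mathscr{A}}\Omega_{11}=0$ and Proposition \ref{poincare_lemma} to obtain a horizontal $\zeta$ with $\mathrm{d}''_{\mathscr{A}}\zeta=\Omega_{11}$, and set $\Theta=\Omega_{20}-\mathrm{d}'_{\mathscr{A}}\zeta$. Two differences are worth noting. Your check that $\Theta$ is closed, via $\mathrm{d}_{\mathscr{A}}\Theta=\mathrm{d}_{\mathscr{A}}\Omega-\mathrm{d}_{\mathscr{A}}^2\zeta=0$, is a shorter route than the paper's componentwise verification. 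Your block-matrix argument also proves the rank-$2r$ claim, which the paper's proof never explicitly verifies.
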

\begin{proof}
Let $\Omega \in \Gamma(\wedge^2 \mathscr{A}^* ) $ be a symplectic section vanishing on the vertical subbundle. 
Fix any Ehresmann connection $\gamma$ and assume that $\Omega$ has the following bigrading decomposition
$$\Omega= \Omega_{20}+\Omega_{11} +\Omega_{02}.$$
Since $\Omega$ vanishes on the vertical subbundle, $\Omega_{02}\equiv 0$. Using the bigraded decomposition \eqref{decom-extd} of the exterior diferential $\mathrm{ d}_{\mathscr{A}} $ and the closedness of $\Omega$, we get
\begin{eqnarray}
\mathrm{ d}_{\mathscr{A}} '' \Omega_{11}=0, \label{clos1}\\
\mathrm{ d}_{\mathscr{A}} ' \Omega_{11}+ \mathrm{ d}_{\mathscr{A}} '' \Omega_{20}=0,\label{clos2}\\
\mathrm{ d}_{\mathscr{A}} ' \Omega_{20}+\partial_\mathscr{A} \Omega_{11} =0.\label{clos3}
\end{eqnarray}
By \eqref{clos1} and Proposition \ref{poincare_lemma}, there exists $\zeta \in\Gamma_{10}( \mathscr{A}^*)  $ such that
$ \mathrm{ d}_{\mathscr{A}} ''   \zeta =  \Omega_{11}$. 
Now, from equations \eqref{clos2} and \eqref{clos3},  we have 
$$\mathrm{ d}_{\mathscr{A}} ' \Omega_{11}+ \mathrm{ d}_{\mathscr{A}} '' \Omega_{20}= \mathrm{ d}_{\mathscr{A}} '  \mathrm{ d}_{\mathscr{A}} ''   \zeta+ \mathrm{ d}_{\mathscr{A}} '' \Omega_{20}=\mathrm{ d}_{\mathscr{A}} ''(\Omega_{20}- \mathrm{ d}_{\mathscr{A}} '  \zeta) =0,  $$
$$\mathrm{ d}_{\mathscr{A}} ' \Omega_{20}+\partial_\mathscr{A} \Omega_{11}= \mathrm{ d}_{\mathscr{A}} ' \Omega_{20}-\mathrm{ d}_{\mathscr{A}} '\circ\mathrm{ d}_{\mathscr{A}} '   \zeta -\mathrm{ d}_{\mathscr{A}} '' \partial_\mathscr{A}  \zeta   = \mathrm{ d}_{\mathscr{A}} '(\Omega_{20}- \mathrm{ d}_{\mathscr{A}} ' \zeta )  =0. $$
These relations imply that $\Theta:= \Omega_{20}- \mathrm{ d}_{\mathscr{A}} ' \zeta$ is a horizontal closed section.  
Finally,
$$\Omega= \Omega_{20}+\Omega_{11} +\Omega_{02}= \Theta+ \mathrm{ d}_{\mathscr{A}} ' \zeta+ \mathrm{ d}_{\mathscr{A}}''\zeta = \Theta + \mathrm{ d}_{\mathscr{A}} \zeta .$$
\end{proof}

\begin{proposition}\label{prop2}
Let $\Omega \in \Gamma(\wedge^2 \mathscr{A}^* ) $ be a symplectic section vanishing on the vertical subbundle. Then, $J(\Omega)=0$ if and only if 
$$ \Omega = \Theta + \omega_L, $$ 
where $\omega_L$ is the Cartan 2-section for some regular Lagrangian $L\in C^\infty(A)$.


\end{proposition}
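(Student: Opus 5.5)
The plan is to start from the decomposition of Proposition \ref{prop1} and show that, under the extra hypothesis $J(\Omega)=0$, the horizontal $1$-section $\zeta$ can be chosen equal to a Cartan $1$-section $\theta_L$.

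\emph{Direct implication.} Let $\Omega$ vanish on the vertical subbundle and satisfy $J(\Omega)=0$. By Proposition \ref{prop1}, $\Omega=\Theta+\mathrm{d}_{\mathscr{A}}\zeta$ with $\Theta$ and $\zeta$ horizontal and $\Theta$ closed.

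\begin{enumerate}
\item First I show that $J$ annihilates horizontal sections. Locally a horizontal form is a combination of wedge products of the $\mathcal{E}^i$, and $J\mathcal{E}^i=0$. Since $J$ is a derivation, $J(\Theta)=0$, and the hypothesis reduces to $J(\mathrm{d}_{\mathscr{A}}\zeta)=0$.
\item Next I compute in coordinates. Write $\zeta=\zeta_k\mathcal{E}^k$. Then $\mathrm{d}_{\mathscr{A}}\zeta=\frac{\partial\zeta_k}{\partial y^j}\Upsilon^j\wedge\mathcal{E}^k+(\text{terms in }\mathcal{E}\wedge\mathcal{E})$. Using $J\Upsilon^j=-\mathcal{E}^j$, this gives $J(\mathrm{d}_{\mathscr{A}}\zeta)=-\frac{\partial\zeta_k}{\partial y^j}\mathcal{E}^j\wedge\mathcal{E}^k$. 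So the condition is exactly the symmetry $\partial\zeta_k/\partial y^j=\partial\zeta_j/\partial y^k$.
\item I then read this symmetry intrinsically. Define $\eta\in\Gamma(\mathrm{Vert}(\mathscr{A})^*)$ by $\eta(JZ):=\zeta(Z)$. This is well defined because $\ker J=\mathrm{Vert}(\mathscr{A})\subset\ker\zeta$ and $\operatorname{Im}J=\mathrm{Vert}(\mathscr{A})$ (Lemma \ref{propertiesJ}). Locally $\eta=\zeta_k\Upsilon^k|_{\mathrm{Vert}}$. Because $[\Upsilon_j,\Upsilon_k]=0$, the symmetry says precisely that $\mathrm{d}_V\eta=0$ in the de Rham complex of the Lie algebroid $\mathrm{Vert}(\mathscr{A})$ with trivial coefficients.
\item Since $\mathrm{Vert}(\mathscr{A})\cong p^*A$ (Lemma \ref{Propvertical}), Corollary \ref{cor:vanish_piE} in degree $1$ gives a global $L\in C^\infty(A)$ with $\mathrm{d}_V L=\eta$. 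Explicitly, the homotopy operator of Proposition \ref{prop:homotopy} gives $L(a)=\int_0^1 \zeta_k(x,ty)\,y^k\,dt$. Hence $\partial L/\partial y^k=\zeta_k$, so $\zeta=\theta_L$ and $\mathrm{d}_{\mathscr{A}}\zeta=\omega_L$.
\item For regularity, write $\Omega=\Theta+\omega_L$ in the basis $\{\mathcal{E}_i,\Upsilon_j\}$ using \eqref{Cartan-2fomcoord}. The matrix has a zero $\Upsilon$–$\Upsilon$ block, off-diagonal blocks $\pm(M_{ij})$, and an $\mathcal{E}$–$\mathcal{E}$ block $N+\Theta$. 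Nondegeneracy of $\Omega$ then forces $\det(M_{ij})\neq0$, so $L$ is regular in the sense used in Section \ref{sec:prelim}.
\end{enumerate}

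\emph{Converse.} Let $\Omega=\Theta+\omega_L$ with $\Theta$ horizontal and closed. Step 1 gives $J(\Theta)=0$. In coordinates, $J(\omega_L)=-M_{ij}\mathcal{E}^i\wedge\mathcal{E}^j=0$ because $M_{ij}$ is symmetric, and the $\mathcal{E}\wedge\mathcal{E}$ part of $\omega_L$ is killed by $J$. Hence $J(\Omega)=0$.

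The main obstacle is the globalization in step 4: producing a single smooth $L$ on all of $A$ from the fiberwise-closed data $\zeta$. This is exactly where the vanishing of vertical cohomology from Section \ref{sec4} is used. I also need to check that the identification via $J$ turns the symmetry condition into $\mathrm{d}_V$-closedness independently of any choice. Note that $L$ is only determined up to adding $p^*f$, which does not change $\theta_L$.
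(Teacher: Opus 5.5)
Your proposal is correct and follows essentially the same route as the paper. Proposition~\ref{prop1} reduces $J(\Omega)=0$ to the symmetry of $\partial\zeta_k/\partial y^j$; this is transported through $J$ into the closedness of a vertical $1$-form, and the vanishing of vertical cohomology from Section~\ref{sec4} then gives a global $L$ with $\zeta=\theta_L$, hence $\mathrm{d}_{\mathscr{A}}\zeta=\omega_L$. The only difference is cosmetic: you work directly with $\eta\in\Gamma(\mathrm{Vert}(\mathscr{A})^*)$ and apply Corollary~\ref{cor:vanish_piE} with trivial coefficients, even writing down the explicit radial primitive. The paper instead fixes an Ehresmann connection, lifts $\zeta$ to a $(0,1)$-section $\widetilde{\zeta}$ and invokes Proposition~\ref{poincare_lemma}, which by Lemma~\ref{lemma:iso_dv} with $p=0$ amounts to the same thing.
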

\begin{proof}
First, we shall assume that $J(\Omega) =0$. By Proposition \ref{prop1}, $\Omega = \Theta + \mathrm{ d}_{\mathscr{A}} \zeta$. 
Since $\Theta$ and $\zeta$ are horizontal, Proposition \ref{propertiesJ} implies that
$$ J(\Omega) = J( \mathrm{ d}_{\mathscr{A}}'' \zeta  ) .$$
Then $J( \mathrm{ d}_{\mathscr{A}}'' \zeta  )=0$. 
Since $J:\mathbb{H}^0\to\mathrm{Vert}(\mathscr{A})^0$ is an isomorphism, 
there exists a unique $(0,1)$-section $\widetilde{\zeta}$ such that $J(\widetilde{\zeta})=-\zeta$.
Furthermore, we have that $\mathrm{ d}_{\mathscr{A}}'' \widetilde{\zeta} =0 $. 
Indeed, If $\zeta =\zeta^{j}\mathcal{E}^{j}$, then $\widetilde{\zeta} = -\zeta^{j}\nu^{j}$ and, by a direct computation, we get 
$$ \mathrm{ d}_{\mathscr{A}}'' \widetilde{\zeta} = \frac{\partial \zeta^{j} }{\partial {y}^{i} }\nu^{i}\wedge\nu^{j} \ \ \ \ \text{and}\ \ \ \ J( \mathrm{ d}_{\mathscr{A}}'' \zeta  )= -\frac{\partial \zeta^{j} }{\partial {y}^{i} }\mathcal{E}^{i}\wedge \mathcal{E}^{j}.$$
From here, we get that the vanishing of $J(\mathrm{d}''_\A\zeta)$ implies $\mathrm{ d}_{\mathscr{A}}'' \widetilde{\zeta} =0$.
Thus, by Proposition \ref{poincare_lemma}, there exists $L\in C^\infty(A)$ such that $\mathrm{ d}_{\mathscr{A}}'' L =\widetilde{\zeta}$. 
Since $J$ vanishes on $\mathrm{Vert}(\mathscr{A})^0$, we get
\[
\d_\A\zeta = -\d_\A J(\d''_\A L) = -\d_\A J(\d_\A L) = \omega_L.
\]
Thus, $ \Omega = \Theta + \omega_L$. 
Since $\Omega$ is symplectic, the Lagrangian $L$ has to be regular. 
Conversely, if $ \Omega = \Theta + \omega_L $, then by \eqref{Cartan-2fomcoord} we get 
$$ J(\Omega) = J(\omega_L) = -\frac{\partial^2 L}{\partial y^i\partial y^j}\mathcal{E}^i\wedge\mathcal{E}^j, $$
which vanishes since $\frac{\partial^2 L}{\partial y^i\partial y^j}$ is symmetric while $\mathcal{E}^i\wedge\mathcal{E}^j$ is skew-symmetric on $i$ and $j$.
\end{proof}

\paragraph{Characterization of Horizontal Closed Sections in the Prolongation.}

Following \cite{Martinez2001}, recall that the map $\hat{p}:\A:=\mathcal{T} A \rightarrow A$ is a Lie algebroid epimorphism over the projection $p:A\rightarrow M$.
In consequence, we have that $\hat{p}^{\,*}$ is a cochain complex monomorphism: $\hat{p}^{\,*} \circ \d_{\A} = \d_A \circ \hat{p}^{\,*}$.
Furthermore, since $\mathrm{Vert}(\A)=\ker\hat{p}$, 
it follows that $\hat{p}^{\,*}:\wedge^{\bullet}A^*\to\wedge^{\bullet}\mathrm{Vert}(\A)^\circ$ is an isomorphism.

\begin{lemma}\label{lemma-hor-closed}
  Let $\Theta\in\Gamma(\wedge^k\mathrm{Vert}(\A)^{\circ})$ be a horizontal section. 
  Then, $\Theta$ is $\d_\A$-closed if and only if
  there exists a $\d_A$-closed section $\theta\in\Gamma(\wedge^kA^*)$ such that $\hat{p}^{\,*}\theta = \Theta$.
\end{lemma}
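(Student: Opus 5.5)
Since $\hat{p}^{\,*}:\wedge^{\bullet}A^*\to\wedge^{\bullet}\mathrm{Vert}(\A)^\circ$ is a fiberwise isomorphism (pulled back along $p$), the plan is to show first that a horizontal $\Theta$ is of the form $\hat p^{\,*}\theta$ for some section $\theta\in\Gamma(\wedge^kA^*)$ over $M$ if and only if it is ``basic'', i.e.\ additionally $\mathcal{L}_Z\Theta=0$ for vertical $Z$. Then the equivalence will follow from the injective cochain map property $\hat{p}^{\,*}\circ\d_A=\d_\A\circ\hat{p}^{\,*}$.

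\emph{Step 1 (easy direction).} If $\theta\in\Gamma(\wedge^kA^*)$ is $\d_A$-closed and $\Theta=\hat p^{\,*}\theta$, then $\d_\A\Theta=\hat p^{\,*}\d_A\theta=0$, and $\Theta$ is horizontal because $\mathrm{Vert}(\A)=\ker\hat p$.

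\emph{Step 2 (closed horizontal implies basic).} Suppose $\Theta$ is horizontal and $\d_\A\Theta=0$. For any vertical section $Z$, Cartan's formula gives $\mathcal{L}_Z\Theta=\ii_Z\d_\A\Theta+\d_\A\ii_Z\Theta=0$. Locally, write $\Theta=\tfrac{1}{k!}\Theta_{i_1\dots i_k}(x,y)\,\mathcal{E}^{i_1}\wedge\dots\wedge\mathcal{E}^{i_k}$. By \eqref{bracket-in-basis} we have $[\Upsilon_j,\mathcal{E}_i]=0$, so $\mathcal{L}_{\Upsilon_j}\mathcal{E}^i=0$. Then $\mathcal{L}_{\Upsilon_j}\Theta=0$ reduces to $\partial\Theta_{i_1\dots i_k}/\partial y^j=0$. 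This step is the crux: it shows the coefficients are functions on $M$ alone.

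\emph{Step 3 (descend).} Because $\hat p^{\,*}e^i=\mathcal{E}^i$, the local forms $\theta_U:=\tfrac1{k!}\Theta_{i_1\dots i_k}(x)e^{i_1}\wedge\dots\wedge e^{i_k}$ satisfy $\hat p^{\,*}\theta_U=\Theta|_{p^{-1}(U)}$. Intrinsically, $\theta_m(b_1,\dots,b_k):=\Theta_a(Z_1,\dots,Z_k)$ for any $a\in A_m$ and any $Z_i\in\A_a$ with $\hat p(Z_i)=b_i$. This is well defined on the choice of $Z_i$ by horizontality, and independent of $a$ in the fiber by Step~2 (fibers of $A$ are connected). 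So these local forms glue to a global $\theta$ with $\hat p^{\,*}\theta=\Theta$. Finally, $\hat p^{\,*}\d_A\theta=\d_\A\Theta=0$, and injectivity of $\hat p^{\,*}$ (since $\hat p$ and $p$ are surjective) gives $\d_A\theta=0$.

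The main obstacle is Step~2: making sure the Lie derivative along vertical sections annihilates the frame $\mathcal{E}^i$. This is handled by the bracket relations \eqref{bracket-in-basis}, together with the connectedness of the fibers used in Step~3.
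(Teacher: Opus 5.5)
Your proof is correct, and it is more careful than the paper's. The paper argues in a few lines: since $\hat{p}^{\,*}\colon\wedge^{\bullet}A^*\to\wedge^{\bullet}\mathrm{Vert}(\A)^\circ$ is an isomorphism, every horizontal $\Theta$ is $\hat{p}^{\,*}\theta$ for a unique $\theta\in\Gamma(\wedge^kA^*)$. Closedness is then transferred both ways via the injective cochain map $\hat{p}^{\,*}$.

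That isomorphism, however, is only fiberwise over $p\colon A\to M$. On sections it identifies $\Gamma(\wedge^k\mathrm{Vert}(\A)^\circ)$ with sections over $A$ of the pullback bundle $p^*\wedge^kA^*$, not with $\Gamma(\wedge^kA^*)$. For instance, the Cartan 1-section $\theta_L=\frac{\partial L}{\partial y^k}\mathcal{E}^k$ is horizontal, but it is not a pullback unless $L$ is fiberwise affine. So the existence of $\theta$ in the forward direction is exactly the point the paper passes over.

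Your Step 2 supplies this missing step. Horizontality and $\mathrm{d}_{\A}\Theta=0$ give $\mathcal{L}_Z\Theta=0$ for vertical $Z$ by Cartan's formula. Since $\mathcal{L}_{\Upsilon_j}\mathcal{E}^i=0$ (which uses $[\Upsilon_j,\Upsilon_l]=0$ as well as $[\Upsilon_j,\mathcal{E}_i]=0$), the coefficients of $\Theta$ are constant along the connected fibers of $A$. Hence $\Theta$ is basic and descends. After that, your Steps 1 and 3 coincide with the paper's use of the cochain-map property and the injectivity of $\hat{p}^{\,*}$.

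The paper's version is shorter, but yours actually establishes the lemma as stated. Its real content is that closed horizontal sections are automatically basic; $\theta_L$ is consistent with this, since it is not closed.
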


\begin{proof}
  Since $\hat{p}^{\,*}:\wedge^{\bullet}A^*\to\wedge^{\bullet}\mathrm{Vert}(\A)^\circ$ is an isomorphism,
  there exists a unique $\theta\in\Gamma(\wedge^kA^*)$ such that $\hat{p}^{\,*}\theta = \Theta$.
  Taking into account that $\hat{p}^{\,*}$ is a morphism of cochain complex, we have that $\hat{p}^{\,*}\d_A\theta = \d_{\A}\Theta$.
  From here, it is clear that $\Theta$ is $\d_{\A}$-closed if $\d_A\theta = 0$.
  Conversely, if $\d_{\A}\Theta = 0$, then $\hat{p}^{\,*}\d_A\theta = 0$.
  Since $\hat{p}^{\,*}$ is injective, we conclude that $\d_A\theta = 0$.
\end{proof}

The preceding proposition provides a simple intrinsic description of the horizontal closed sections of the prolongation 
and allows us to construct them systematically.
With this in mind, we arrive at the following characterization of the symplectic sections vanishing on the vertical subbundle by means of objects on the base algebroid.

\begin{corollary}\label{cor-hor-clos}
A symplectic section $\Omega\in\Gamma(\wedge^2\A^*)$ vanishing on the vertical subbundle satisfies $J(\Omega)=0$ if and only if
there exist a regular Lagrangian function $L\in C^\infty(A)$ and a $\d_A$-closed section $\theta \in \Gamma(\wedge^2 A^*)$ such that
\[
  \Omega = \omega_L + \hat{p}^{\, *}\theta.
\]
\end{corollary}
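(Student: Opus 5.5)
The corollary follows by combining Proposition \ref{prop2} with Lemma \ref{lemma-hor-closed}, with one subtlety: ``horizontal'' in Proposition \ref{prop1} must mean annihilating vertical sections. For a 2-section $\Theta$, the condition $\mathrm{i}_Z\Theta=0$ for every vertical $Z$ means exactly that $\Theta\in\Gamma(\wedge^2\mathrm{Vert}(\A)^\circ)$, i.e.\ $\Theta$ has bidegree $(2,0)$ for any Ehresmann connection. I would state this identification explicitly first, since it is what lets Lemma \ref{lemma-hor-closed} apply to the $\Theta$ produced in Propositions \ref{prop1} and \ref{prop2}.

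\emph{Forward direction.} Suppose $\Omega$ is symplectic, vanishes on $\mathrm{Vert}(\A)$, and satisfies $J(\Omega)=0$. By Proposition \ref{prop2}, $\Omega=\Theta+\omega_L$ with $L$ regular. Tracing back through Proposition \ref{prop1}, $\Theta=\Omega_{20}-\d'_\A\zeta$ is horizontal and $\d_\A$-closed. Lemma \ref{lemma-hor-closed} with $k=2$ then gives a $\d_A$-closed $\theta\in\Gamma(\wedge^2A^*)$ with $\hat p^{\,*}\theta=\Theta$. This yields $\Omega=\omega_L+\hat p^{\,*}\theta$.

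\emph{Converse.} Let $L$ be regular and $\theta$ be $\d_A$-closed, and set $\Omega=\omega_L+\hat p^{\,*}\theta$. Since $\hat p^{\,*}$ is a cochain map, $\Omega$ is closed. Since $\mathrm{Vert}(\A)=\ker\hat p$, the term $\hat p^{\,*}\theta$ is horizontal; as $\omega_L$ also vanishes on the vertical subbundle, so does $\Omega$. It remains to check that $\Omega$ is symplectic, and this is the main point needing care.

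For nondegeneracy I would work in the local basis. Locally $\hat p^{\,*}\theta=\tfrac12\Theta_{kl}\mathcal{E}^k\wedge\mathcal{E}^l$, so adding it to \eqref{Cartan-2fomcoord} only replaces $N_{kl}$ by $N_{kl}+\Theta_{kl}$. The matrix of $\Omega$ in the basis $\{\mathcal{E}_i,\Upsilon_j\}$ is therefore block triangular with off-diagonal blocks $\pm(M_{ij})$ and a zero $\Upsilon\Upsilon$ block. Its determinant is $\pm\det(M_{ij})^2\neq0$ by regularity of $L$.

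Finally, $J(\hat p^{\,*}\theta)=0$ because $J$ vanishes on $\mathrm{Vert}(\A)^\circ$: we have $J\mathcal{E}^i=0$, and $J$ is a derivation. Together with $J(\omega_L)=0$ from the converse part of Proposition \ref{prop2}, this gives $J(\Omega)=0$.
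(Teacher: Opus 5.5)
Your proposal is correct and follows the paper's route. The paper obtains the corollary by combining Proposition \ref{prop2} (where $\Theta=\Omega_{20}-\mathrm{d}_{\mathscr{A}}'\zeta$ from Proposition \ref{prop1} has bidegree $(2,0)$, hence is horizontal) with Lemma \ref{lemma-hor-closed} for $k=2$, and the converse reduces to $J(\hat{p}^{\,*}\theta)=0=J(\omega_L)$. Your additional verification that $\omega_L+\hat{p}^{\,*}\theta$ is closed, vanishes on $\mathrm{Vert}(\mathscr{A})$ and is nondegenerate (block determinant $\pm\det(M_{ij})^2$) is not needed, since the statement already assumes these properties of $\Omega$; it is correct, however, and it justifies the paper's use of this family of symplectic sections in Section \ref{sec:proof}.
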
 

This characterization provides the geometric framework underlying the parametrization of the family of Poisson structures introduced in Section \ref{sec:main}. 
In particular, the closed sections obtained here correspond to the data that determine the family of Poisson brackets in Theorem \ref{maintheorem}. 
The precise relation between these objects will be established in Section \ref{sec:proof}.

\section{Proof of the Main Theorem}\label{sec:proof}

In this section, we present a proof of Theorem \ref{maintheorem} by using 
the characterizations of symplectic sections on the prolongation vanishing in the vertical distribution obtained in Section \ref{sec:vanish}.
We show that the Poisson brackets induced by these sections on the Lie algebroid have the properties stated in the theorem.
More precisely, we prove that each symplectic section satisfying the conditions of Proposition \ref{prop2}
gives rise to a Poisson bracket having semisprays Hamiltonian vector fields.
This establishes the link between the geometric framework developed in Section \ref{sec:vanish} 
and the coordinate description introduced in Section \ref{sec:main}.

We begin the proof by choosing  a regular Lagrangian $L\in C^\infty(A)$ 
and consider the Cartan 2-form $\omega_L$ \eqref{cartantwo} associated with $L$. Let
\[
  \Omega = \Theta + \omega_L,
\]
where $\Theta$ is an arbitrary $\d_\A$-closed horizontal section. 
By Proposition \ref{prop2}, $\Omega$ is a $\Theta$-parameterized family of symplectic sections 
vanishing on the vertical subbundle such that $J(\Omega)=0$. 

Let $\{ \cdot,\cdot \}_{\Omega}$ be the Poisson bracket \eqref{pois-brac} on $A$ induced by $\Omega$.
To show that the Hamiltonian vector field of $E_L +p^*f$, $f\in C^\infty(M)$, is a semispray,
we shall show that it corresponds to a Hamiltonian section of $\Omega$ which is also an SODE.  

Consider the Hamiltonian section $\sigma_{E_L} $ with respect to $\omega_L$,
$$ \mathrm{i}_{\sigma_{E_L} }\omega_L = - \mathrm{ d}_{\mathscr{A}} E_L.$$
Recall from Proposition \ref{prop:sigmaSODE} that $\sigma_L$ is a SODE on $A$.
For $f\in C^\infty(M)$, let $Z\in \Gamma(\mathscr{A})$ be the section given by
$$\mathrm{i}_Z \omega_L = -\mathrm{ d}_{\mathscr{A}}(p^*f) - \mathrm{i}_{\sigma_L }\Theta,$$
which is well-defined because of $\omega_L$ is non-degenerate. 
Since $\omega_L^{\flat}:\mathrm{Vert}(\A)\to \mathrm{Vert}(\A)^0$ is an isomorphism,
we have that $Z$ is a vertical section.
Taking into account that $\sigma_{E_L}$ is a SODE, the sum $Z+ \sigma_{E_L}$ is again an SODE. 
By direct computations, we get
\[
\mathrm{i}_{Z +\sigma_{E_L}}\Omega = \mathrm{i}_{\sigma_{E_L} }\Theta + \mathrm{i}_{Z +\sigma_{E_L}}\omega_L = \mathrm{i}_{\sigma_{E_L} }\Theta - \mathrm{ d}_{\mathscr{A}}(p^*f) - \mathrm{i}_{\sigma_{E_L} }\Theta - \mathrm{ d}_{\mathscr{A}} E_L
= - \mathrm{ d}_{\mathscr{A}}( E_L + p^*f).
\]
Thus, $Z +\sigma_{E_L} $ is both an SODE on $A$ and a Hamiltonian section for $\Omega$. 
By Proposition \ref{poisson-base}, the vector field $\mathcal{V}:=\hat{\rho}(\sigma_{E_L} + Z)$ is both a semispray on $A$ and
the Hamiltonian vector field of the function the function $E_L+p^*f$ with respect to the Poisson structure $\{ \cdot,\cdot \}_{\Omega} $ on $A$ induced by $\Omega$.


We conclude the paper with a representative example that illustrates the scope
and geometric content of our main result. In this case, the construction can be
made explicit in local coordinates, revealing the interplay between the Poisson
structure and the induced second-order dynamics.

\begin{example}
Let $(M,\Pi)$ be a Poisson manifold and consider its cotangent Lie algebroid
$A = T^*M$ (see Example \ref{Poiss-algebroid}). In local coordinates $(U,x^1,\ldots,x^n)$ on $M$, we write
\[
\Pi = \frac{1}{2} \Pi^{ij}(x)\, \frac{\partial}{\partial x^i} \wedge \frac{\partial}{\partial x^j}.
\]
In adapted coordinates $(x^i,p_i)$ on $T^*M$, the structure functions \eqref{strucfunc} are
$\displaystyle\rho^i_j = -\Pi^{ij}(x)$ and  $\displaystyle C^k_{ij} = \frac{\partial \Pi^{ij}}{\partial x^k}$. 

Let $g$ be a Riemannian metric on $M$, with inverse $g^{ij}(x)$, and consider the Lagrangian $\displaystyle L \in C^\infty(A)$ defined by
\[
L(x,p) = \frac{1}{2} g^{ij}(x)\, p_i p_j.
\]
The Lagrangian $L$ is regular and coincides with its energy function: 
\[
E_L = L = \frac{1}{2} g^{ij}(x)\, p_i p_j
\]
Since $\Pi$ is Poisson, we can think of $\Pi\in \Gamma(\wedge^2 TM)$ as a closed section.
By Lemma \ref{lemma-hor-closed}, we have the horizontal closed section on the prolongation
$\Theta=\hat{p}^{\, *}\Pi = \frac{1}{2}\Pi^{ij}\mathcal{E}^{i}\wedge \mathcal{E}^{i}$.
Using the general construction described in Section 3, we obtaining a family of Poisson brackets on $T^*M$, locally given by
\begin{equation*}
\{  x^{i},x^{j} \}=0, \quad  \{  x^{i},p_{k} \}= -\Pi^{ir}g_{rk},  \quad    \{  p_{k},p_{l} \}= -g_{kr}N_{rs}g_{sl},
\end{equation*}
where the functions $N_{ij}$  take the form
\[
N_{ij} = \left(\Pi^{ki}(\Gamma^j_{kr}g^{rl}+\Gamma^l_{kr}g^{jr})-\Pi^{kj}(\Gamma^i_{kr}g^{rl}+\Gamma^l_{kr}g^{ir})-g^{kl}\frac{\partial \Pi^{ij}}{\partial x^k}\right)p_l + \Pi_{ij},
\]
where $\Gamma^k_{ij}$ denote the Christofel symbols of any connection satisfying $\nabla g=0$. 
Thus, the Hamiltonian semispray of $E_L=L$ is given by 
$$ \mathcal{V}(x,y)=p_j\Pi^{ji}(x)\frac{\partial}{\partial x^i}+ g_{ks}\left(N_{sr}p_r +\frac{1}{2}\Pi^{si} \left( \Gamma^r_{ij}g^{jl}+ \Gamma^l_{ij}g^{rj}\right)p_rp_l \right)\frac{\partial}{\partial p_k}. $$
As a semispray, the base dynamics of the vector field $ \mathcal{V}(x,y)$ is entirely determined by the Poisson tensor.
On the other hand, the fiber dynamic depends on the metric, the connection, and on the derivatives of the Poisson tensor. 
Specifically, the metric enters in a covariant way while the Poisson tensor contributes through its explicit variation.

If we further assume that the Poisson tensor $\Pi$ is 
$\nabla$-parallel,
then, using the identities
\[
\partial_k \Pi^{ij} = - \Gamma^i_{kl} \Pi^{lj} - \Gamma^j_{kl} \Pi^{il},
\]
one can rewrite the coefficients $N_{ij}$ in terms of the Christoffel symbols of $\nabla$. 
In this case, the resulting semispray is fully covariant. 
\end{example}

This example highlights the geometric nature of our construction 
and shows that the existence of Hamiltonian semisprays on Lie algebroids is not merely a formal extension of the tangent bundle case, 
but rather a phenomenon intrinsically tied to the interplay between Poisson and algebroid structures.

\paragraph{Acknowledgements.} 
SECIHTI supported this research under the Grant CF-2023-G-279.
 

\end{document}